\documentclass[11pt]{amsart}
\usepackage{amsmath,amsthm,amsfonts,amssymb, eucal, amscd}
\newtheorem{thm}[equation]{Theorem}

\newtheorem{lem}[equation]{Lemma}
\newtheorem{prop}[equation]{Proposition}
\newtheorem{pgraph}[equation]{} 
\theoremstyle{definition}
\newtheorem{defn}[equation]{Definition}

\newtheorem{rem}[equation]{Remark}

\newtheorem{asspt}[equation]{Assumption}
\newtheorem{subsec}[equation]{} 
\numberwithin{equation}{section}

\newcommand{\un}{\underline}
\newcommand{\K}{\mathbb K} 
\newcommand{\ot}{\otimes}
\newcommand{\one}  {\underline {1}}
\newcommand{\Ob}{\mathfrak{ob}}
\newcommand{\Mod}{\mathfrak{mod}}
\newcommand{\Dft}{\mathsf{D}(\un \phi,\un t)}
\newcommand{\Df}{\mathsf{D}}

\def \n {\noindent}

\def \m {\medskip} 
\newcommand{\Ars}{\mathsf{A}_{\un r, \un s}(n)}

\def \a {\alpha}

\def \be {\beta}

\def \e {\epsilon}

\def \gcd {{\operatorname {gcd}}}

\def \l {\lambda}

\def \lcm {{\operatorname {lcm}}}

\def \w {\omega}

\def \K {\Bbb K} 
\def \Z {\Bbb Z}

\usepackage[letterpaper,margin=1.48in]{geometry}
\begin{document}
\title{Multiparameter Weyl Algebras}
\date{}

\author[]{Georgia Benkart} \address{Department of Mathematics \\ University
of Wisconsin \\  Madison, WI  53706}
\email{benkart@math.wisc.edu}
\maketitle 
\begin{abstract}{We introduce a family of unital associative algebras  $\Ars$
which are multiparameter analogues of the Weyl algebras and 
determine the simple  weight modules and the Whittaker modules for $\Ars$.   All these modules can  be regarded as spaces of (Laurent) polynomials with certain $\Ars$-actions 
on them.

This paper was written in February 2008 and some copies of it were distributed, but it has never been posted or published. We thank Bryan Bischof and Jason Gaddis for their interest in the work and for  encouraging us to make the paper more widely available.   The references in this posted  version have been updated, and some cosmetic changes made;  in particular, some typos  have been corrected.  The investigations have been generalized by V. Futorny and J. Hartwig to  the context of multiparameter twisted Weyl algebras (see Journal of Algebra {\bf 357} (2012), 69-93).  
}  \end{abstract}

\section{Introduction}\label{sec:intro}

Let  $\un r
= (r_1,\dots,r_n)$ and $\un s = (s_1,\dots,s_n)$ be $n$-tuples of nonzero scalars
in a field $\K$.
We introduce a family of unital associative $\K$-algebras $\Ars$ which
are multiparameter analogues of the 
Weyl algebras.  Specializing all the  $r_i$ to equal $q$ and all the
$s_i$ to equal
$q^{-1}$ and factoring out by a certain ideal  gives Hayashi's $q$-analogues of the Weyl algebras (see \cite{H}). 
Our algebras have connections with the generalized Weyl algebras in (\cite{Bav1}, \cite{Bav2}, \cite{J})  and with
the down-up algebras in \cite{BenR},
and they have a natural action on the polynomial algebra in $n$ variables. 
We define Verma modules for the algebras  $\Ars$  and show
that the polynomial algebra is a Verma module.    We give a complete description of all the 
simple weight modules for 
$\Ars$ when $\K$ is algebraically closed 
and the only pair $(p,q) \in \mathbb Z^2$ such that $r_i^p = s_i^q$ is  $(0,0)$ for each $i$.      In the final
section, we determine the Whittaker modules for $\Ars$. 
All these modules can be regarded as spaces of (Laurent) polynomials with certain 
$\Ars$-actions on them.

\smallskip  
Assume $\un r = (r_1,\dots,r_n)$ and $\un s = (s_1,\dots,s_n)$ are $n$-tuples of nonzero scalars
in a field $\K$ such that  $(r_is_i^{-1})^2 \neq 1$ for each $i$.  
Let $\Ars$ be the 
unital associative algebra over the field $\K$ 
generated by elements
$\rho_i, \rho_i^{-1}, \sigma_i, \sigma_i^{-1}, x_i$, $y_i, \ i=1,\dots, n$, subject to
the following relations:  
\begin{itemize}
\item[(R1)] 
 The $\rho_i^{\pm 1}, \ \sigma_j^{\pm 1}$ all commute with one 
another and  $\rho_i\rho_i^{-1}=\sigma_i\sigma_i^{-1}=1;$  

\item[(R2)] 
$\rho_ix_j =r_i^{\delta_{i,j}}x_j \rho_i \qquad 
\rho_i y_j = r_i^{-\delta_{i,j}}y_j \rho_i \qquad    \quad 1 \leq i,j \leq n;$

\item[(R3)] $\sigma_ix_j =s_i^{\delta_{i,j}}x_j \sigma_i \qquad
\sigma_i y_j = s_i^{-\delta_{i,j}}y_j \sigma_i \qquad  
   \quad 1 \leq i,j \leq n;$

\item[(R4)]  $x_ix_j = x_jx_i, \qquad  y_iy_j = y_jy_i,  \qquad 1 \leq i,j \leq n; $

\noindent   $y_ix_j =  x_j y_i, \qquad  1 \leq i \neq j \leq n$;

\item[(R5)]   
$y_ix_i - r_i^2 x_iy_i = \sigma_i^2$ \  and \
$y_ix_i - s_i^2 x_iy_i = \rho_i^2$,   \qquad $1 \leq i \leq n$,

\n or equivalently
\smallskip

\item[(R5')]   
$\displaystyle{y_ix_i =  \frac{ r_i^2\rho_i^2 -s_i^2 \sigma_i^2}{r_i^2 -s_i^2}}$ \  and \
$\displaystyle{x_iy_i =  \frac{\rho_i^2 -\sigma_i^2}{r_i^2 -s_i^2} \qquad 1 \leq i \leq n.}$
\end{itemize}  

When $r_i = q$ and $s_i = q^{-1}$ for all $i$, we may  
 quotient by the ideal generated by
the elements  $\rho_i \sigma_i -1$, $i=1,\dots,n$,  to obtain Hayashi's algebra $\mathsf{A}_q^{-}(n)$.
(The generators $\rho_i, \sigma_i,  x_i, y_i$  are identified with Hayashi's  $\omega_i, \omega_i^{-1}, \psi_i^\dagger, \psi_i$,  respectively.)      
\smallskip

The elements $\rho_i$, $\sigma_i$ act as automorphisms
on $\Ars$ by conjugation.  These automorphisms fix
the elements $\rho_j$, $\sigma_j$ for
all $j$ and the elements $x_j,y_j$ for $j \neq i$.  Moreover,
\begin{eqnarray}  \rho_i x_i \rho_i^{-1} = r_i x_i  \qquad \qquad 
\rho_i y_i \rho_i^{-1} &=& r_i^{-1} y_i,  \label{eq:conj1} \\
\sigma_i x_i \sigma_i^{-1} = s_i x_i\qquad \qquad 
\sigma_i y_i \sigma_i^{-1} &=& s_i^{-1}y_i.  \label{eq:conj2}   \end{eqnarray}

\n Thus  $\rho_i^p \sigma_i^{-q}$ is the identity automorphism for some
$p,q
\in
\mathbb Z$ if and only if $r_i^p = s_i^q$.   
\medskip
 
 \begin{asspt}\label{asspt1}  Henceforth we assume that 
for  each $i =1,\dots,n$,  the only pair $(p,q) \in \mathbb Z^2$ such that 
$r_i^p = s_i^q$ is the pair $(0,0)$.   \end{asspt}
  
Under this assumption, we may identify
the elements $\rho_i, \sigma_i$ with the automorphisms they determine.

\section{Connections with generalized Weyl algebras} \label{sec:GWA} 

A {\it generalized Weyl algebra} $\Dft$ {\it of degree $n$}  is constructed
from a commutative algebra
$\mathsf{D}$ over $\K$, an $n$-tuple $\un \phi = (\phi_1,\dots, \phi_n)$ of
commuting  automorphisms of $\mathsf{D}$, and an $n$-tuple
$\un t = (t_1,\dots,t_n)$ of nonzero elements $t_i  \in \Df$.  Then $\Dft$
is the unital associative algebra
generated over $\mathsf{D}$ by $2n$ elements, $X_i,Y_i$, $i=1,\dots,n$,  subject to the relations  
\smallskip
\begin{itemize}
\item[(W1)]  $Y_i X_i = t_i,  \qquad   \qquad  \ X_i Y_i = \phi_i(t_i),$
\item[(W2)]  $X_i d = \phi_i(d) X_i,  \qquad Y_i d = \phi_i^{-1}(d)Y_i,$  for all $d \in \mathsf{D}$, 
\item[(W3)]  $X_iX_j = X_jX_i,   \qquad  \  Y_iY_j = Y_jY _i,  \qquad 
 \ (1 \leq i,j \leq n),$  
 \item[(W4)] $X_iY_j =  Y_j X_i,  \qquad  \ \  (1 \leq i \neq j \leq n).$  \end{itemize} 
\smallskip

When $\mathsf{D} =\Df_1 \times \Df_2 \times \cdots \times \Df_n$, $t_i \in \Df_i$, and $\phi_i$
is an automorphism of $\Df_i$ extended to $\Df$ by
having $\phi_i$ act as the identity automorphism on $\Df_j$ for $j \neq i$,  the algebra 
  $\Dft$ is isomorphic to the tensor product (over $\K$)
of $n$ degree one generalized Weyl algebras, 
$\Df_1(\phi_1,t_1) \ot \cdots \ot  \Df_n(\phi_n,t_n)$, where $\Df_i(\phi_i,t_i)$ has
generators $X_i, Y_i$. 
\smallskip 

The multiparameter Weyl algebra $\Ars$ can be realized as 
a degree $n$ generalized Weyl algebra.   For this construction, let
$\Df_i$ be the subalgebra of $\Ars$ generated by the 
elements $\rho_i,\rho_i^{-1},\sigma_i,
\sigma_i^{-1}$.  Thus, $\Df_i$ is isomorphic to $\K[\rho_i^{\pm 1},\sigma_i^{\pm 1}]$.   Set $\Df = \Df_1 \times \Df_2 \times \cdots \times \Df_n$. 
Let  $\phi_i$ be the automorphism of $\Df$ given by

\begin{equation}\label{eq:phiidef} \phi_i(\rho_j) = r_i^{-\delta_{i,j}}\rho_j \qquad \qquad \phi_i(\sigma_i) = s_i^{-\delta_{i,j}}\sigma_i. \end{equation}
 
Now set
\begin{equation}\label{eq:tidef}t_i = \frac{r_i^2\rho_i^2 -s_i^2\sigma_i^2}{r_i^2 -s_i^2}, \qquad
X_i = x_i, \qquad Y_i = y_i, \end{equation}  
and observe that 

$$Y_iX_i = t_i,  \qquad \text{and} \qquad  X_iY_i = \frac{ \rho_i^2 - \sigma_i^2}{r_i^2 -s_i^2} =
\phi_i(t_i)$$ 
\noindent  are just the relations in (R5)'.  The relations in (R1) and (R4) are apparent. 
The identities in (R2) and (R3) are equivalent to the
statements  $Y_j d = \phi_j^{-1}(d)Y_j, \ \ X_jd = \phi_j(d)X_j$ with
$d = \rho_i$ and $\sigma_i$.      Therefore, there is a surjection $\mathsf{W}_n: = \Dft
\rightarrow \Ars$. But since $\Ars$ has a presentation
by (R1)-(R5), there is a surjection $\Ars \rightarrow
\mathsf{W}_n$.   Since that map is the inverse of the other one, these
algebras are isomorphic.  

 Bavula \cite[Prop.~7]{Bav1}  has shown  that  a generalized
Weyl algebra  $\Dft$  is left and right
Noetherian if $\Df$ is Noetherian, and it is a domain if $\Df$ is a domain. Since $\Df$ is commutative
and finitely generated, it is Noetherian (see \cite[Prop.~1.2]{GW}),
hence so are $\mathsf{W}_n$ and $\Ars$. 
By Assumption \ref{asspt1}, $\Df$ is a domain since it can be identified with the Laurent polynomial algebra
$\K[\rho_i^{\pm 1}, \sigma_i^{\pm 1} \mid i =  1,\dots, n]$;  hence $\Ars$ is
a domain also.      In summary, we have 
\begin{prop}\label{prop:GWA} When the parameters $r_i,s_i$ satisfy  Assumption \ref{asspt1},  the  multiparameter Weyl algebra $\Ars$ is isomorphic
to the degree $n$ generalized Weyl algebra $\mathsf{W}_n = \Dft$,  where
$\Df$ is the $\K$-algebra generated by the elements $\rho_i,\rho_i^{-1},\sigma_i,
\sigma_i^{-1}$, $i=1,\dots,n$, subject to the relations in \hbox{\rm (R1)},  $\phi_i$ is as in \eqref{eq:phiidef} for each i;
and the elements $t_i$ are as in \eqref{eq:tidef}. Thus, $\Ars$
is a left and right Noetherian  domain.      \end{prop}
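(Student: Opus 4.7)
The plan is to identify $\Ars$ with the generalized Weyl algebra $\mathsf{W}_n = \Dft$ via the explicit assignments already given in \eqref{eq:phiidef} and \eqref{eq:tidef}, and then deduce the Noetherian/domain conclusions from Bavula's structure theorem. The work splits cleanly into (i) building mutually inverse homomorphisms between $\Ars$ and $\mathsf{W}_n$, and (ii) checking the hypotheses needed to apply Bavula's result to $\mathsf{W}_n$.

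First I would set $\Df_i$ to be the subalgebra of $\Ars$ generated by $\rho_i^{\pm 1}, \sigma_i^{\pm 1}$. Thanks to Assumption \ref{asspt1}, no nontrivial monomial $\rho_i^p \sigma_i^{-q}$ acts as the identity on $\Ars$ (it would force $r_i^p = s_i^q$), so $\Df_i \cong \K[\rho_i^{\pm 1}, \sigma_i^{\pm 1}]$. Forming $\Df = \Df_1 \times \cdots \times \Df_n$ and defining $\phi_i$ and $t_i$ as in \eqref{eq:phiidef}--\eqref{eq:tidef}, I would verify each GWA relation (W1)--(W4) directly against the relations (R1)--(R5) of $\Ars$: (W1) is exactly (R5'); (W3) and (W4) reduce to the commutation identities in (R4); and (W2) is the content of (R2)--(R3) applied to the generators $\rho_i, \sigma_i$ of $\Df$, extended multiplicatively. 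This produces a surjective homomorphism $\mathsf{W}_n \to \Ars$.

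Conversely, to build the inverse $\Ars \to \mathsf{W}_n$, I would use the presentation of $\Ars$ by generators and relations (R1)--(R5), sending $\rho_i^{\pm 1}, \sigma_i^{\pm 1}, x_i, y_i$ to their counterparts $\rho_i^{\pm 1}, \sigma_i^{\pm 1}, X_i, Y_i$ in $\mathsf{W}_n$. The relations (R1)--(R4) are immediate from the definition of $\Df$ and from (W2)--(W4), while (R5) is obtained from (W1) after rearranging to the two forms $Y_iX_i - r_i^2 X_iY_i = \sigma_i^2$ and $Y_iX_i - s_i^2 X_iY_i = \rho_i^2$ using the explicit formula for $t_i$. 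Since the two maps invert each other on generators, they are mutually inverse isomorphisms.

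Finally, for the structural properties I would invoke \cite[Prop.~7]{Bav1}, which reduces both Noetherianity and the domain property of $\Dft$ to the corresponding properties of $\Df$. Under Assumption \ref{asspt1}, $\Df$ is the Laurent polynomial algebra $\K[\rho_i^{\pm 1}, \sigma_i^{\pm 1} \mid i =1, \dots, n]$; it is Noetherian as a finitely generated commutative algebra (Hilbert basis theorem, see \cite[Prop.~1.2]{GW}) and is evidently a domain. The main subtlety, and essentially the only nontrivial input, is Assumption \ref{asspt1}: without it, $\Df$ could fail to be a domain, and $\Df_i$ could be a proper quotient of $\K[\rho_i^{\pm 1}, \sigma_i^{\pm 1}]$, which would break the identification of $\Ars$ as the GWA described. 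Everything else is a routine comparison of two presentations.
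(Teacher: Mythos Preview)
Your argument is correct and matches the paper's approach essentially line for line: the same identification of $\Df_i$ with a Laurent polynomial ring via Assumption~\ref{asspt1}, the same relation-by-relation check that (W1)--(W4) correspond to (R1)--(R5'), the same pair of mutually inverse surjections between $\Ars$ and $\mathsf{W}_n$, and the same appeal to \cite[Prop.~7]{Bav1} together with \cite[Prop.~1.2]{GW} for the Noetherian domain conclusion. Your final paragraph emphasizing why Assumption~\ref{asspt1} is needed is a helpful addition but does not change the argument.
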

  
\begin{thm}\label{thm:Asimple}   Under Assumption \ref{asspt1},  $\Ars$ is a simple algebra.  \end{thm}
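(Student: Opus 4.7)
The plan is to pass from $\Ars$ to the isomorphic generalized Weyl algebra $\mathsf{W}_n = \Dft$ via Proposition \ref{prop:GWA}, and to exploit the tensor product decomposition
\[\mathsf{W}_n \;\cong\; \Df_1(\phi_1,t_1) \ot_{\K} \Df_2(\phi_2,t_2) \ot_{\K} \cdots \ot_{\K} \Df_n(\phi_n,t_n),\]
which is available because each $\phi_i$ acts trivially on the subalgebra $\Df_j = \K[\rho_j^{\pm 1},\sigma_j^{\pm 1}]$ for $j\neq i$ while $t_i\in\Df_i$. It then suffices to show that each tensor factor $\Df_i(\phi_i,t_i)$ is central simple over $\K$; the standard fact that a tensor product over $\K$ of central simple $\K$-algebras is central simple will complete the argument.

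For each fixed $i$, I will apply Bavula's simplicity criterion for a degree-one GWA over a commutative Noetherian domain (\cite{Bav1}): $\Df_i(\phi_i,t_i)$ is simple iff (a) $\Df_i$ is $\phi_i$-simple and (b) $\Df_i t_i+\Df_i\phi_i^k(t_i)=\Df_i$ for every $k\geq 1$. For (a), $\phi_i$ is diagonal on the Laurent-monomial basis of $\Df_i$, scaling $\rho_i^a\sigma_i^b$ by $r_i^{-a}s_i^{-b}$, and Assumption \ref{asspt1} forces these scalars to be pairwise distinct over distinct $(a,b)$. If $I$ is a nonzero $\phi_i$-invariant ideal and $f\in I$ has minimal support, rescaling by a unit monomial places $1$ in that support, and then $f-\phi_i(f)\in I$ has strictly smaller support unless $f$ is itself a single monomial; but monomials are units, so $I=\Df_i$.

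For (b), both $t_i$ and $\phi_i^k(t_i) = \frac{r_i^{2-2k}\rho_i^2 - s_i^{2-2k}\sigma_i^2}{r_i^2-s_i^2}$ are $\K$-linear combinations of the units $\rho_i^2,\sigma_i^2 \in \Df_i$, and the determinant of the resulting $2\times 2$ matrix is a nonzero scalar multiple of $s_i^{2k}-r_i^{2k}$, which by Assumption \ref{asspt1} is nonzero for $k\neq 0$. Inverting, $\rho_i^2$ and $\sigma_i^2$ lie in $\K t_i+\K\phi_i^k(t_i)\subseteq \Df_it_i+\Df_i\phi_i^k(t_i)$, forcing the latter ideal to be all of $\Df_i$. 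Finally, for centrality: commuting a general element $\sum_n d_n v_n$ of the GWA with $\Df_i$ forces $d_n(\phi_i^n(d)-d)=0$ for all $d\in\Df_i$, and since $\phi_i^n\neq\id$ for $n\neq 0$ by Assumption \ref{asspt1}, the domain $\Df_i$ forces $d_n=0$ for $n\neq 0$; commutation with $X_i$ then places $d_0$ in $\Df_i^{\phi_i}$, which the eigenvalue analysis of (a) identifies with $\K$. I expect step (b) to be the main obstacle, since that is where the specific quadratic form of $t_i$ must be shown to mesh with Assumption \ref{asspt1}; parts (a) and the center computation follow uniformly from the eigenvalue picture.
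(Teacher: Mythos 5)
Your proof follows essentially the same route as the paper: both pass to the tensor decomposition into degree-one factors $\Df_i(\phi_i,t_i)$ and verify the standard simplicity criterion for a degree-one generalized Weyl algebra (the paper cites Jordan's theorem, you cite Bavula's; the conditions coincide), using the same minimal-support argument for the absence of proper $\phi_i$-invariant ideals and the same observation that $t_i$ and $\phi_i^k(t_i)$ are, for $k\neq 0$, linearly independent combinations of the units $\rho_i^2,\sigma_i^2$ because $r_i^{2k}\neq s_i^{2k}$ under Assumption \ref{asspt1}. The one substantive difference is that you also compute the center of each factor to be $\K$ (via $\Df_i^{\phi_i}=\K$ and the vanishing of the nonzero-degree components) before invoking the fact that a central simple algebra tensored with a simple algebra is simple. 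That addition is worthwhile: the paper's closing sentence silently uses that a tensor product of simple $\K$-algebras is simple, which fails in general (e.g.\ $\C\ot_{\mathbb R}\C$), so your centrality check is precisely what is needed to make the final step airtight, and your eigenvalue argument for it is correct under Assumption \ref{asspt1}.
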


\begin{proof}  We will invoke a result of Jordan \cite[Thm.~6.1]{J} which provides
a criterion for the simplicity of a degree one generalized Weyl algebra  $\mathcal D(\varphi,\tau)$. 
Such an algebra $\mathcal D(\varphi,\tau)$ is simple if (i) $\mathcal D$ is
a commutative Noetherian ring; (ii) $\varphi$ has infinite order; 
(iii) $\tau$ is regular; (iv) $\mathcal D$ has no $\varphi$-invariant ideals except 0 and $\mathcal D$;
and (v) for all positive integers $m$, $\tau \mathcal D + \varphi^m(\tau) \mathcal D = \mathcal D$. 

  In applying this result, we will take $\mathcal D$ to be one of the algebras $\Df_i$ 
and will omit the subscript $i$.  Thus,  we will  
suppose $\Df$ is generated by $\rho^{\pm 1}$ and
$\sigma^{\pm 1}$. Let $\phi$ be as in \eqref{eq:phiidef}, and $t$ be as in \eqref{eq:tidef}.
The assumptions of Theorem \ref{thm:Asimple} imply that $\Df = \K[\rho^{\pm 1},\sigma^{\pm 1}]$  is a commutative, Noetherian
domain, which gives (i) and (iii).   Moreover, since  $r$ and $s$
are not  roots of unity, we have (ii).  Assume $\mathsf{J}$ is a $\phi$-invariant
ideal of $\Df$.  Let $v = \sum_{k,\ell} c_{k,\ell} \rho^k \sigma^\ell$ be a nonzero element of $\mathsf{J}$ 
with a minimal number of nonzero summands.  Clearly, if there is only one such
summand, then $1 \in \mathsf{J}$ and $\mathsf{J}= \Df$, so we may assume there are at least two
nonzero summands.  Let $(k',\ell')$ be a pair such that
$c_{k',\ell'} \neq 0$.    Applying $\phi$ we have
$\phi(v) = \sum_{k,\ell} c_{k,\ell} r^{-k}s^{-\ell}\rho^k \sigma^\ell$. But then
$r^{k'} s^{\ell'} \phi(v) - v$ has fewer summands.  It is nonzero since
$r^{k'-k}s^{\ell'-\ell} \neq 1$ whenever $(k,\ell) \neq (k',\ell')$.  This shows that
a minimal sum must have only one term and $\mathsf{J} = \Df$. Therefore, $\Df$ has no non-trivial
$\phi$-invariant ideals.   Finally, for (v) note that
\begin{eqnarray*}
&&\hspace{-.8 truein}  t(r^2-s^2)r^{-2m}\sigma^{-2} - \phi^m(t)(r^2-s^2)\sigma^{-2} \\
& =& (r^2\rho^2 -s^2 \sigma^2)r^{-2m}\sigma^{-2} - (r^{2-2m}\rho^2 -
s^{2-2m}\sigma^2)\sigma^{-2} \\
& =&  s^2(s^{-2m}-r^{-2m}) \neq 0, \end{eqnarray*}  
\noindent  so that $t\Df+ \phi^m(t)\Df = \Df$ for all $m$.   Therefore, by Jordan's theorem, 
$\Df_i[\phi_i,t_i]$ is simple for each $i$.   As $\Ars \cong
\Df_1[\phi_1,t_1] \ot \cdots \ot \Df_n[\phi_n,t_n]$, the result follows. 
\end{proof}

\smallskip
The algebra $\Ars$ has a natural action on 
the $n$ variable polynomial
algebra. Consider $n$-tuples $\un k = (k_1,\dots,k_n)$ of nonnegative integers.
Let $\e_i$ be the $n$-tuple with $1$ in the $i$th position and 0
as the rest of the components.  Let $\mathsf{P}(n) = \mathbb K[z_1,\dots,z_n]$,  the space
of polynomials with the basis 
consisting of the monomials, 
$$z(\un k) = z_1^{k_1} z_2^{k_2} \cdots z_n^{k_n},  \quad \un k \in  \mathbb N^n,$$

\n and with the $\Ars$-action given by
\begin{eqnarray}\label{eq:acts}
\rho_i z(\un k) &=& r_i^{k_i}z(\un k), \\
 \sigma_i z(\un k) &=& s_i^{k_i}z(\un k), \nonumber \\
x_i z(\un k) &=&  z(\un k+\e_i), \nonumber  \\
y_i z(\un k) &=& [k_i] z(\un k-\e_i), \nonumber 
\nonumber  \end{eqnarray}
\n where  
\begin{equation}[k] = \frac {r_i^{2k} - s_i^{2k}}{r_i^2 - s_i^2}.\end{equation}
\n Under Assumption \ref{asspt1},  $\mathsf{P}(n)$ is a simple module for $\Ars$.
Indeed, the vectors $z(\un k)$ are common eigenvectors for the $\rho_i$ and $\sigma_i$,
and each  $z(\un k)$ determines a one-dimensional eigenspace.   Any nonzero submodule must contain one of the
vectors $z(\un k)$, and then applying \eqref{eq:acts}, we see it must contain
all such basis vectors.  \m

  \section{ Connections with down-up algebras}
 
Down-up algebras were introduced in (\cite{BenR}, \cite{Ben})  as generalizations of the 
algebra generated by the down and up operators on a partially ordered set.
They are unital associative $\mathbb K$-algebras $\mathsf{A}(\alpha,\beta,\gamma)$ with generators $d,u$ which
satisfy the relations
\begin{eqnarray*}
d^2 u &=& \alpha dud + \beta ud^2 + \gamma d \\
du^2 &=& \alpha udu + \beta u^2 d + \gamma u, \end{eqnarray*} 
\n where $\alpha,\beta,\gamma$ are fixed scalars from $\mathbb K$.  These algebras exhibit
many beautiful properties. For example, they have a Poincar\'e-Birkhoff-Witt type basis,
$$\{u^i (du)^j d^k \mid i,j,k \in \mathbb N \},$$
\n and Gelfand-Kirillov dimension 3 (see \cite{BenR}  and \cite{Ben}).   
They are left and right Noetherian domains if and only if
$\beta \neq 0$.  In that case, they can be realized as generalized
Weyl algebras of degree one \cite{KMP}  or as certain hyperbolic rings \cite{Ku}. 
Also when $\beta \neq 0$, they have Krull dimension 2 if and only
if char($\mathbb K$) = 0, $\gamma \neq 0$ and $\a+\be = 1$.  Otherwise the Krull
dimension is 3 (see \cite{BL}).  
\smallskip

 In the algebra 
$\Ars$,  
multiplying the equations in (R5)' by $y_i$ and $x_i$ on the left and
right and using (R2) shows that the following relations hold for each $i$:

\begin{eqnarray}
y_i^2 x_i & =&  (r_i^2+s_i^2)y_ix_iy_i - r_i^2s_i^2 x_i y_i^2  \\
y_ix_i^2 & =&  (r_i^2+s_i^2)x_iy_ix_i - r_i^2s_i^2 y_i x_i^2.  \nonumber \end{eqnarray}
Thus, the elements $x_i$ and $y_i$ satisfy the defining relations 
of the down-up algebra $\mathsf{A}(r_i^2+s_i^2, -r_i^2s_i^2, 0)$.  
\begin{section}{Connections with multiparameter quantum groups}\end{section}

 We assume here that
$(\,,\,)$ is a $\mathbb Z$-bilinear form on $\mathbb Z^n$ relative to which 
the $\e_i$'s are orthonormal.  We adopt the
notation that $\alpha_i = \e_i-\e_{i+1}$ for $1 \leq i < n$.  We introduce a family of algebras
which generalize Takeuchi's two-parameter quantum groups (see \cite{T} and \cite{BW}).
\smallskip
 
As before let $\un r$ and $\un s$ be $n$-tuples of nonzero scalars.  The 
unital associative $\mathbb K$-algebra 
$\mathsf{U}_{\un r, \un s}(\mathfrak{sl}_n)$ is the algebra
generated by  $e_i,f_i, \omega_i^{\pm 1}
(\omega_i')^{\pm 1}, \ 1 \leq i < n$, subject to the following relations: 
\smallskip

\begin{itemize}
\item[{(U1)}]
 The $\w_i^{\pm 1}, \ (\w_j')^{\pm 1}$ all commute with one 
another and

\noindent $\w_i\w_i^{-1}=\w'_i(\w'_i)^{-1}=1;$
 
\item[{(U2)}] $\w_ie_j = r_i^{(\epsilon_i,\a_j)}s_{i+1}^{(\e_{i+1},\a_j)}e_j \w_i, $ 
 
\noindent $\w_i f_j =r_i^{-(\epsilon_i,\a_j)}s_{i+1}^{-(\e_{i+1},\a_j)}f_j \w_i;$

\item[{(U3)}]
$\w_i'e_j = r_{i+1}^{(\epsilon_{i+1},\a_j)}s_{i}^{(\e_{i},\a_j)}e_j \w_i' ,$

\noindent  $\w_i'f_j = r_{i+1}^{-(\epsilon_{i+1},\a_j)}s_{i}^{-(\e_{i},\a_j)}e_j \w_i'$; 
\item[{(U4)}]
$[e_i,f_j] =  
\delta_{i,j} \displaystyle{\frac{\omega_i^2-(\omega_i')^2} {r_i^2-s_i^2}};$

\item[{(U5)}] $[e_i,[e_i,e_{i+1}]_{r_{i+1}^2}]_{s_{i+1}^2} = 0 \qquad
[[e_i,e_{i+1}]_{r_{i+1}^2},e_{i+1}]_{s_{i+1}^2} = 0;$

\item[{(U6)}]
$[f_i,[f_i,f_{i+1}]_{r_{i+1}^{-2}}]_{s_{i+1}^{-2}} = 0 \qquad
[[f_i,f_{i+1}]_{r_{i+1}^{-2}},f_{i+1}]_{s_{i+1}^{-2}} = 0,$ 
\end{itemize}

\n for all $1 \leq i,j < n$,  with the convention that $[x,y]_q = xy-qyx$.  
 
\begin{rem}  Takeuchi's algebras are just the
case that $r_i =r$ and $s_i = s$ for all $i$.  When
$r_i= q$ and $s_i = q^{-1}$ for all $i$,  then after factoring out the ideal
generated by the elements $\omega_i'-\omega_i^{-1}$, 
the resulting algebra is isomorphic to $\mathsf{U}_q(\mathfrak{sl}_n)$.  In \cite{T} and \cite{BW}, relation (U4) reads
$$[e_i,f_j] =  
\delta_{i,j} \displaystyle{\frac {\omega_i-\omega_i',} {r-s}},$$
 \n and the subscripts in (U5) and (U6) are $r$ and $s$ rather than what is above.  This
difference is minor - it amounts to replacing $r,s$ in those papers by $r^2,s^2$ and then
choosing the bilinear form so that $(\e_i,\e_j) = \delta_{i,j}/2$. 
\end{rem}

There exists an algebra homomorphism
 $\mathsf{U}_{\un r,\un s}(\mathfrak{sl}_n) \rightarrow \Ars$  given by
\begin{eqnarray}\label{eq:maps}
&& \w_i \mapsto \rho_i \sigma_{i+1}, \\
&& \w_i' \mapsto \rho_{i+1}\sigma_i.  \nonumber \\
&& e_i \mapsto y_{i+1}x_i, \nonumber \\
&& f_i \mapsto y_i x_{i+1}, \nonumber 
 \end{eqnarray}

\n Verifying this is straightforward.  We present the argument only for (U5):
\begin{eqnarray*} 
&& \hspace{-.3 truein} [e_i,[e_i, e_{i+1}]_{r_{i+1}^2}]_{s_{i+1}^2} \mapsto  \\
&& \hspace{-.2 truein} 
y_{i+2}x_i^2\Big(y_{i+1}^2x_{i+1} - (r_{i+1}^2+s_{i+1}^2)y_{i+1}x_{i+1}y_{i+1}
+r_{i+1}^2s_{i+1}^2x_{i+1}y_{i+1}^2
\Big) = 0.\end{eqnarray*}
The rest of the calculations for (U5) and (U6) are virtually identical to these and just involve
the down-up relations or the variations on them obtained by multiplying through by
$r_{i+1}^{-2}s_{i+1}^{-2}$.   
\m

\begin{subsec}{\bf Weight modules for $\mathsf{U}_{\un r,\un s}(\mathfrak{sl}_n)$}\end{subsec}

Let $\mathsf{M}$ be a module for $\mathsf{U}_{\un r,\un s}(\mathfrak{sl}_n)$.
A {\it weight} $(\eta,\vartheta)$ of $\mathsf{M}$ consists of  two tuples $\eta = (\eta_1, \dots, \eta_n)$
and $\vartheta=(\vartheta_1,\dots,\vartheta_n)$ in  $ \K^{n-1}$  such that
$$\mathsf{M}_{(\eta,\vartheta)}=\{v \in \mathsf{M} \mid \omega_i v =\eta_i v, \  
\omega_i' v = \vartheta_i v \ \text{for all} \ i \} \neq 0.$$
\n $\mathsf{M}$ is a {\it weight module} if $\mathsf{M} = \bigoplus_{(\eta, \vartheta)} \mathsf{M}_{(\eta,\vartheta)}$.
\smallskip

When we regard the polynomial algebra $\mathsf{P}(n)$ as a module for $\mathsf{U}_{\un r,\un s}(\mathfrak{sl}_n)$ 
using the homomorphism in \eqref{eq:maps}, we obtain:
 \begin{eqnarray}
&& \omega_i  z(\un k) = r_i^{k_i}s_{i+1}^{k_{i+1}} z(\un k), \\
&& \omega_i' z(\un k) = r_{i+1}^{k_{i+1}}s_i^{k_i} z(\un k). \nonumber \\
&& e_i z(\un k) = [k_{i+1}]z(\un k+\e_{i}-\e_{i+1}), \nonumber \\ 
&& f_i z(\un k) = [k_i] z(\un k +
\e_{i+1} - \e_i),  \nonumber  
 \end{eqnarray}

\noindent  From this we can see that $\mathsf{P}(n)$ is a weight module,  as each monomial
$z(\un k)$ is a weight vector.  The weights are all distinct by Assumption \ref{asspt1}.  \smallskip

The monomials $z(\un k)$ such that $\sum_i k_i = m$ form a $\mathsf{U}_{\un r,\un s}(\mathfrak{sl}_n)$-submodule
$\mathsf{P}(n)_m$ and $\mathsf{P}(n) = \bigoplus_{m=0}^\infty \mathsf{P}(n)_m$.   The module $\mathsf{P}(n)_m$ is simple for each $m$.  To see this, note that any nonzero $\mathsf{U}_{\un r,\un s}(\mathfrak{sl}_n)$-submodule $\mathsf{Q}$ 
of $\mathsf{P}(n)_m$ will decompose into weight spaces.  Hence $\mathsf{Q}$ must contain
some $z(\un k)$.  Since $[\ell] \neq 0$ for $\ell > 0$, we can apply the
operators $e_i$ and $f_i$ to show that any monomial of degree $m$ belongs $\mathsf{Q}$.  
The vector $z(m\e_1)$ is annihilated by all the $e_i$. Thus, since it
is a weight vector and generates $\mathsf{P}(n)_m$, it is a highest weight vector for that module.  
\smallskip
 
\begin{section}{Verma modules}\end{section}
 
Assume $\lambda = (\lambda_1, \dots, \lambda_n)$ is a fixed $n$-tuple of elements of $\mathbb K$.
Let $\Omega$ be the group of automorphisms of $\Ars$ generated by
the elements $\rho_i^{\pm 1}, \sigma_i^{\pm 1}, \ i =1,\dots,n$. 
Suppose $\zeta: \Omega \rightarrow \{\pm 1\}$ is a group homomorphism.  
Let $\mathsf{V}(\lambda, \zeta)$ be the
$\K$-vector space having basis $\{v(\un k) \mid
\un k \in \mathbb N^n\}$ and having the following 
$\Ars$-module action:  
\begin{eqnarray}\label{eq:vermav}
\rho_i v(\un k) & =&  r_i^{k_i}\l_i \zeta_i v(\un k),     \\
\sigma_i v(\un k) & =&  s_i^{k_i}\l_i\zeta_i' v(\un k), \nonumber \\
x_i v(\un k) & =& v(\un k + \e_i),  \nonumber  \\
y_i v(\un k) & =& [k_i]\l_i^2 v(\un k - \e_i),  \nonumber  
\end{eqnarray} 

\n \n where $\zeta_i = \zeta(\rho_i)$
and $\zeta_i' = \zeta(\sigma_i)$.  Our convention is that $v(\un \ell) = 0$ whenever $\un \ell \not \in
\mathbb N^n$. 

One special case is where $\lambda = \one$, the tuple of all 1's, and
$\zeta(\rho_i) = 1 = \zeta(\sigma_i)$ for all $i$.  In this case, $\mathsf{V}(\lambda,\zeta) = \mathsf{P}(n)$. 

To show that $\mathsf{V}(\lambda,\zeta)$ is  a module for $\Ars$, we need to verify that the relations
(R1)-(R4) and (R5)' hold.   These routine calculations are omitted.  
\bigskip

\begin{section}{  Weight modules for $\Ars$} \end{section}
 
In this section we use the realization of $\Ars$ as a generalized
Weyl algebra $\mathsf{A} = \Df(\un \phi, \un t)$ to determine the simple weight modules of $\Ars$.
Thus, $\mathsf{D} = \mathbb K[\rho_i^{\pm 1},\sigma_i^{\pm 1}\mid i=1,\dots,n]$, 
$t_i =   \displaystyle{\frac{r_i^2 \rho_i^2 - s_i^2 \sigma_i^2}{r_i^2 - s_i^2}}$, 
and $\phi_i$ is the automorphism of $\mathsf{D}$ given by 
$\phi_i(\rho_j) = r_i^{-\delta_{i,j}} \rho_j$ and $\phi_i(\sigma_j) = s_i^{-\delta_{i,j}}\sigma_j$
for $i=1,\dots, n$.
We set $X_i = x_i$ and $Y_i = y_i$ for each $i$ and assume that the
relations (W1)-(W4) hold  in $\mathsf{A} = \mathsf{D}(\un \phi, \un t)$ for these choices.    
Let $\Phi$ denote
the group generated by the automorphisms $\phi_i$. 

Let $\mathfrak{max}\Df$ denote the set of  maximal ideals of $\Df$.   A module
for $\Df(\un \phi, \un t)$ is said to be a \emph {weight module}  if  $\mathsf{V} = \bigoplus_{\mathfrak m} \mathsf{V}_{\mathfrak m}$
where $\mathsf{V}_{\mathfrak m} = \{ v \in \mathsf{V} \mid \mathfrak m v = 0\}$.     The maximal ideal
$\mathfrak m$ is a \emph{weight} of $\mathsf{V}$ if $\mathsf{V}_{\mathfrak m} \neq 0$.  
The support $\mathfrak{supp}(\mathsf{V})$  is the set of weights of $\mathsf{V}$.    It is 
easy to verify that $X_i.\mathsf{V}_{\mathfrak m} \subseteq \mathsf{V}_{\phi_i(\mathfrak m)}$
and $Y_i .\mathsf{V}_{\mathfrak m} \subseteq \mathsf{V}_{\phi_i^{-1}(\mathfrak m)}$.
Thus, each weight module $\mathsf{V}$ can be decomposed into a direct sum of $\mathsf{A}$-submodules:

$$\mathsf{V}= \bigoplus_{\mathcal O}  \mathsf{V}_{\mathcal O},   \quad   \quad 
\mathsf{V}_{\mathcal O} = \bigoplus_{\mathfrak m \in \mathcal O} \mathsf{V}_{\mathfrak m},$$

\noindent   where $\mathcal O$ runs over the $\Phi$-orbits of $\mathfrak{max}\Df$.   
Consequently,  the category $\mathcal W(\mathsf{A})$ of weight modules for $\mathsf{A}$ decomposes
into a direct sum of full subcategories corresponding to the orbits of $\Phi$.  In particular,
an indecomposable weight module must have weights belonging to a single orbit $\mathcal O$.
Let  $\mathcal W_{\mathcal O}(\mathsf A)$  denote the subcategory of weight modules for $\mathsf{A}$
whose support lies in the orbit $\mathcal O$.      Our aim is to show that $\mathcal W_{\mathcal O}(\mathsf{A})$
is equivalent to a certain category $\mathcal C_{\mathcal O}$.   We then  use results
developed in \cite{BBF}  and \cite{DGO} to give an explicit realization of the
simple modules in $\mathcal W_{\mathcal O}(\mathsf{A})$.  \medskip

  {\it  Henceforth we will assume $\mathbb K$ is an algebraically closed field.}
 \m
 
Our  assumption on the field implies that each maximal ideal $\mathfrak m$ of $\Df$ has the form 
$\mathfrak m = \langle \rho_i - \mu_i, \sigma_i - \nu_i \mid i = 1,\dots,n\rangle$ for nonzero
scalars $\mu_i, \nu_i \in \mathbb K$.   
\m

\begin{subsec}{\bf The category $\mathcal C_{\mathcal O}$} \label{subsec:co}  \end{subsec}

Observe that if $\tau \in \Phi$ and $\tau(\mathfrak m) = \mathfrak m$ for  $\mathfrak m
= \langle \rho_i - \mu_i, \sigma_i - \nu_i \mid i = 1,\dots,n\rangle \in \mathfrak{max}\Df$, 
then $\tau = 1$.    Indeed, if $\tau = \phi_1^{k_1} \cdots \phi_n^{k_n}$,   then
$\tau(\rho_i-\mu_i) = r_i^{-k_i} \rho_i - \mu_i \in \mathfrak m$, hence $\rho_i - r_i^{k_i}\mu_i \in \mathfrak m$, implying that $r_i^{k_i} \mu_i = \mu_i$.   But since $\mu_i \neq 0$ and $r_i$ is not 
a root of unity, it must be that $k_i = 0$ for each $i$.   Thus, $\tau = 1$.   This says, in the
language of \cite{BvO} and \cite{BBF},  that every orbit $\mathcal O$ is {\it linear}.

\smallskip   When $t_j$ belongs to the maximal ideal $\mathfrak m = \langle \rho_i - \mu_i, \sigma_i - \nu_i \mid i = 1,\dots,n\rangle$, we say that $\mathfrak m$ has a {\it break} at $j$.        Now 
\begin{eqnarray}  t_j =  \displaystyle{\frac{r_j^2 \rho_j^2 - s_j^2 \sigma_j^2}{r_j^2 - s_j^2}} \in
\mathfrak m & \Longleftrightarrow &   r_j^2 \rho_j^2 - s_j^2 \sigma_j^ 2 \in \mathfrak m \\
& \Longleftrightarrow &  r_j^2 \mu_j^2 - s_j^2 \nu_j^2 = 0 \nonumber \\
& \Longleftrightarrow &  \nu_j = \pm r_j s_j^{-1}\mu_j. \nonumber  \end{eqnarray}
 
Assume that the maximal ideal $\mathfrak m = \langle \rho_i - \mu_i, \sigma_i - \nu_i \mid i = 1,\dots,n\rangle$  has the largest number of breaks among all the maximal ideals in the orbit
$\mathcal O$.       Suppose that the breaks occur at the values in the set
$J = \{j_1,\dots, j_q\}$.  (Possibly $J$ is empty.)            Thus, from the above
computation, we know that $j \in J$ if and only if  $\nu_j = \pm r_j s_j^{-1} \mu_j$.   
Now assume that $k \in \{1,\dots, n\} \setminus J$ and that some 

$$\phi_1^{p_1} \dots \phi_n^{p_n} (\mathfrak m) = \langle \rho_i - r_i^{p_i} \mu_i, \sigma_i - s_i^{p_i}\nu_i \mid i=1\dots n\rangle$$ has a break at $k$.     Then
$s_k^{p_k} \nu_k =  \pm r_k s_k^{-1} r_k^{p_k} \mu_k$, or 
$\nu_k = \pm (r_k s_k^{-1})^{p_k+1} \mu_k$.   But then  the maximal ideal 

$$\phi_k^{p_k}(\mathfrak m)    
= \langle \rho_i - \mu_i, \sigma_i - \nu_i  \ (i \neq k),  \
\rho_k-r_k^{p_k} \mu_k, \sigma_k - s_k^{p_k} \nu_k   \rangle$$

\noindent has more breaks than $\mathfrak m$.      This contradiction shows
that for the maximal ideals in the orbit $\mathcal O$, there are no breaks
outside of the set $J$.    Thus, in the terminology of \cite{BBF}, the maximal ideal $\mathfrak m$
is a {\it maximal break with respect to the set $J$}.    The set $J$ contains the set of 
breaks of every element in $\mathcal O$.  

Now with $\mathfrak m =   \langle \rho_i -  \mu_i, \sigma_i -  \nu_i \mid i=1\dots n\rangle$
 in $\mathcal O$ having the largest  number of breaks,  observe that
 for any $\mathfrak  n \in \mathcal O$, there exists an
 automorphism in $\Phi$ sending $\mathfrak m$ to $\mathfrak n$,
 and that automorphism is unique by our assumptions on the parameters  $r_i$ and $s_i$,
 so we denote it  $\phi_{\mathfrak n}$.   Note also  that
 $\phi_i \phi_{\mathfrak n} = \phi_{\phi_i(\mathfrak n)}$.
 
 For each $\phi_{\mathfrak n}$, we have an 
induced isomorphism $\Df/\mathfrak m \rightarrow \Df/\mathfrak n$, which we
again denote 
$\phi_{{\mathfrak n}}$, given by $\phi_{{\mathfrak n}}(d + \mathfrak m) = \phi_{\mathfrak n}(d) 
+ \mathfrak n$.  Let $\phi_{\mathfrak n}^{-1}: \Df/\mathfrak n \rightarrow \Df/\mathfrak m$ be
the 
inverse isomorphism.  

\begin{defn}\label{defn:co}   The category  ${\mathcal C}_{\mathcal  O}$ is the
$\K$-category whose objects are the maximal ideals in 
$ {\mathcal O}$ generated over $\K$ by
the set of 
morphisms $\{X_{{\mathfrak n},i}, Y_{{\mathfrak n},i}\, |\,{\mathfrak n}\in {\mathcal  O},
1\leq i\leq 
n\}$, where $X_{{\mathfrak n}, i}: {\mathfrak n}\to \phi_i({\mathfrak n})$ and
$Y_{{\mathfrak n}, i}: 
\phi_i({\mathfrak n})\to {\mathfrak n}$, subject to the relations:
 \begin{eqnarray}\label{eq:corels}   && X_{{\mathfrak n},i}\lambda =\lambda X_{{\mathfrak n},i},\
Y_{{\mathfrak n},i}\lambda =\lambda Y_{{\mathfrak n},i}, \\
&& Y_{{\mathfrak n},
i}X_{{\mathfrak n}, i}= 
\phi_{{\mathfrak n}}^{-1}(\overline t_i) 1_{{\mathfrak n}},   \quad \hbox{\rm where}
\ \ \overline t_i = t_i + \mathfrak n,   \nonumber   \\    
&&X_{{\mathfrak n}, i}Y_{{\mathfrak n}, 
i}= \phi_{{\mathfrak n}}^{-1}(\overline t_i)
1_{\phi_i(\mathfrak n)},  \nonumber  \end{eqnarray}  for each $\lambda \in \K$, ${\mathfrak n}\in {\mathcal O}$, and
$\ 1\leq 
i\leq n$.    \end{defn}

Let  ${\mathcal C}_{\mathcal O}$-$\Mod$ be the category of $\K$-linear additive functors
$\mathsf{M}: {\mathcal C}_{\mathcal O}  \rightarrow \K$-$\Mod$ into the category  
of $\K$-vector spaces.   Thus, $\mathsf{M}(\mathfrak n)$ is a $\K$-vector space for each
$\mathfrak n \in \mathfrak{max}\Df$, and $\mathsf{M}(a)u \in \mathsf{M}(\mathfrak p)$ for all  morphisms $a \in
{\mathcal C}_{\mathcal O}(\mathfrak n, \mathfrak p)$ and all $u \in \mathsf{M}(\mathfrak n)$.  
To make the action appear more module-like, we write $au$ rather than  $\mathsf{M}(a)u$.

\begin{prop} \label{prop1} Let $\mathsf{A} = \Ars = \Df(\un \phi,\un t)$,  and let ${\mathcal O}$ be an orbit of $\mathfrak{max} \Df$.   Then 
${\mathcal W}_{{\mathcal O}}(\mathsf{A})\cong {\mathcal C}_{\mathcal O}$-$\Mod$.   
\end{prop}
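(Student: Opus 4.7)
The plan is to establish the equivalence by constructing mutually quasi-inverse $\K$-linear functors $F\colon \mathcal{W}_{\mathcal{O}}(\mathsf{A}) \to {\mathcal C}_{\mathcal O}$-$\Mod$ and $G\colon {\mathcal C}_{\mathcal O}$-$\Mod \to \mathcal{W}_{\mathcal{O}}(\mathsf{A})$. The forward functor sends a weight module $\mathsf{V}$ to the functor $F(\mathsf{V})$ defined on objects by $F(\mathsf{V})(\mathfrak{n}) = \mathsf{V}_{\mathfrak n}$, and on the generating morphisms by the restrictions $F(\mathsf{V})(X_{\mathfrak{n},i}) = X_i|_{\mathsf{V}_{\mathfrak{n}}}$ and $F(\mathsf{V})(Y_{\mathfrak{n},i}) = Y_i|_{\mathsf{V}_{\phi_i(\mathfrak{n})}}$. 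These land in the correct weight spaces by the inclusions $X_i.\mathsf{V}_{\mathfrak{n}} \subseteq \mathsf{V}_{\phi_i(\mathfrak{n})}$ and $Y_i.\mathsf{V}_{\mathfrak{n}} \subseteq \mathsf{V}_{\phi_i^{-1}(\mathfrak{n})}$ recorded just before the proposition.

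For the reverse direction, given a ${\mathcal C}_{\mathcal O}$-module $\mathsf{M}$, set $G(\mathsf{M}) = \bigoplus_{\mathfrak{n} \in {\mathcal O}} \mathsf{M}(\mathfrak{n})$. For $v \in \mathsf{M}(\mathfrak{n})$, let $\Df$ act by the scalar action $d \cdot v = (d + \mathfrak{n}) v$ coming from $\Df/\mathfrak{n} \cong \K$, and declare $X_i \cdot v = X_{\mathfrak{n},i} v \in \mathsf{M}(\phi_i(\mathfrak{n}))$ and $Y_i \cdot v = Y_{\phi_i^{-1}(\mathfrak{n}),i} v \in \mathsf{M}(\phi_i^{-1}(\mathfrak{n}))$. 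By construction $G(\mathsf{M})$ is visibly a weight module whose $\mathfrak n$-weight space is $\mathsf{M}(\mathfrak{n})$, provided that the defining relations (W1)--(W4) of the generalized Weyl algebra hold.

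The central check, and the step that deserves the most care, is that the relations \eqref{eq:corels} correspond exactly to (W1) and (W2) under these identifications. For (W1), on the ${\mathcal C}_{\mathcal O}$ side I read $Y_i X_i v = Y_{\mathfrak{n},i} X_{\mathfrak{n},i} v = \phi_{\mathfrak n}^{-1}(\overline{t_i}) v$, which must equal the scalar action of $t_i$, namely $(t_i + \mathfrak{n})v$. Writing $\phi_{\mathfrak n} = \phi_1^{p_1} \cdots \phi_n^{p_n}$ with $\phi_{\mathfrak{n}}(\mathfrak{m}) = \mathfrak{n}$ and plugging into the formula \eqref{eq:tidef} for $t_i$, a direct evaluation gives $\phi_{\mathfrak{n}}^{-1}(t_i)(\mathfrak{m}) = t_i(\mathfrak{n}) = \phi_i(t_i)(\phi_i(\mathfrak{n}))$, which simultaneously produces both equalities in \eqref{eq:corels} from the two halves of (W1). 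Relation (W2) reduces to the observation that the scalar value of $d$ at $\mathfrak{n}$ equals the scalar value of $\phi_i(d)$ at $\phi_i(\mathfrak{n})$; and (W3), (W4) for $i \neq j$ are automatic, since the two composites $X_i X_j, X_j X_i$ (respectively $Y_i X_j, X_j Y_i$) of morphisms in ${\mathcal C}_{\mathcal O}$ have the same source and target and act on each summand $\mathsf{M}(\mathfrak{n})$ in the same way.

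Finally, the natural isomorphisms $F \circ G \cong \mathrm{id}$ and $G \circ F \cong \mathrm{id}$ are essentially tautological, since both compositions reproduce the original weight decomposition with the original action. The only genuine obstacle is the bookkeeping in the preceding paragraph: keeping straight how $\phi_{\mathfrak n}^{-1}(\overline{t_i})$, viewed in $\Df/\mathfrak m \cong \K$, matches the scalar by which $t_i$ and $\phi_i(t_i)$ act at $\mathfrak{n}$ and $\phi_i(\mathfrak{n})$, respectively. All other verifications are formal, relying only on (W1)--(W4) and the defining relations \eqref{eq:corels}.
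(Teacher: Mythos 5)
Your construction is the same as the paper's: the same pair of functors $\mathsf{V}\mapsto \mathsf{M}_{\mathsf{V}}$ and $\mathsf{M}\mapsto \mathsf{V}_{\mathsf{M}}$, the same assignments on objects and on the generating morphisms, and the same central computation. Your scalar identity $\phi_{\mathfrak n}^{-1}(t_i)+\mathfrak m = t_i+\mathfrak n = \phi_i(t_i)+\phi_i(\mathfrak n)$ (all read in $\K$) is correct and is exactly what the paper verifies, just phrased via evaluation at the point $\mathfrak n$ rather than via the identity $\phi^{-1}_{\phi_i(\mathfrak n)}\circ\phi_i=\phi_{\mathfrak n}^{-1}$ on residue fields; likewise your treatment of (W2) matches the paper's check that $X_{\mathfrak n,i}\overline d v=\overline d X_{\mathfrak n,i}v$.

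One step is not justified as you state it: the claim that (W3) and (W4) hold on $G(\mathsf{M})$ ``since the two composites have the same source and target and act on each summand in the same way.'' Having the same source and target does not make two morphisms of $\mathcal C_{\mathcal O}$ equal, and a $\K$-linear functor $\mathsf{M}$ is only required to respect the \emph{defining} relations of $\mathcal C_{\mathcal O}$; nothing forces $\mathsf{M}(X_{\phi_j(\mathfrak n),i})\mathsf{M}(X_{\mathfrak n,j})=\mathsf{M}(X_{\phi_i(\mathfrak n),j})\mathsf{M}(X_{\mathfrak n,i})$ unless the commutation relations among the $X$'s and $Y$'s for distinct indices are themselves relations of $\mathcal C_{\mathcal O}$. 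They are not listed in \eqref{eq:corels}, but they must be understood as part of Definition \ref{defn:co} for degree $n>1$ (they appear explicitly in the quiver description ${\mathfrak A}(\K,J)$ and in the analogous categories of \cite{BBF} and \cite{DGO}, and without them Proposition \ref{skelrel} would also fail). With that reading your conclusion is correct, but the reason is ``it is a defining relation of $\mathcal C_{\mathcal O}$,'' not ``same source and target''; so state the commutation relations as part of the category and cite them, rather than deriving them. The paper's own proof is equally terse on this point, so this is a repair rather than a change of route.
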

 
\begin{proof}  The proof is similar to that of Proposition 2.2 in
\cite{DGO}.   Compare also the proof of Proposition 3.4 of \cite{BBF}.   
We assume $\mathfrak m$ is the designated maximal ideal of
$\mathcal O$ having the largest number of breaks.  
We implicitly identify $\K$ with $\Df/\mathfrak m$ in the following.  
 Let $\mathsf{V} = \bigoplus_{\mathfrak n \in {\mathcal O}}\mathsf{V}_{\mathfrak n}$ belong
to 
${\mathcal W}_{{\mathcal O}}(\mathsf{A})$.  For each $\mathfrak n \in {\mathcal O}$, set
$\mathsf{M}_\mathsf{V}(\mathfrak n) = \mathsf{V}_{\mathfrak n}$.  Using the 
isomorphism $\phi_{\mathfrak n}: \Df/{\mathfrak m} \rightarrow \Df/{\mathfrak n}$, 
we can view $\mathsf{M}_\mathsf{V}(\mathfrak n)$ 
as a $(\Df/\mathfrak m)$-vector space via $\overline d v := \phi_{\mathfrak n}(\overline d)v$, 
($\overline  d = d + \mathfrak m$).  For $v\in \mathsf{M}_\mathsf{V}(\mathfrak n)$ 
and $w\in \mathsf{M}_\mathsf{V}(\phi_i(\mathfrak n))$, 
we define $X_{\mathfrak n,i}v: = X_iv\in \mathsf{M}_\mathsf{V}(\phi_i(\mathfrak n))$ 
and $Y_{\mathfrak n,i}w: = 
Y_iw \in \mathsf{M}_\mathsf{V}(\mathfrak n)$. Then for  $\overline d \in \Df/\mathfrak m$, we have 
$X_{\mathfrak n,i}\overline d v = X_i\phi_\mathfrak n(\overline d)v = 
\phi_i(\phi_{\mathfrak n}(\overline d))X_iv = \overline d X_{\mathfrak n,i}v$, 
and $Y_{\mathfrak n, i}\overline d w= \overline d Y_{\mathfrak n,i}w.$ We also have 
$Y_{\mathfrak n,i}X_{\mathfrak n,i}v=Y_iX_iv = t_iv = (t_i+\mathfrak n)v 
= \phi_\mathfrak n^{-1} (\overline  t_i)v$ for $\overline  t_i = t_i + \mathfrak n$, 
and $X_{\mathfrak n,i}Y_{\mathfrak n,i}w= X_i Y_i w 
= \phi_i(t_i)w = \big(\phi_i(t_i)+ \phi_i(\mathfrak n)\big)w = 
\phi^{-1}_{\phi_i(\mathfrak n)}(\phi_i(\overline t_i))w= \phi_{{\mathfrak 
n}}^{-1}(\overline t_i)w$.
Hence, $\mathsf{M}_\mathsf{V}$ ($\mathsf{M}_\mathsf{V}: {\mathcal C}_{\mathcal O} \rightarrow 
(\Df/\mathfrak m)$-${\Mod}$) is a ${\mathcal C}_{\mathcal O}$-module, 
and we have the functor

\begin{equation} \mathsf{F} : {\mathcal W}_{{\mathcal O}}(\mathsf{A})\to
{\mathcal C}_{\mathcal O}\mbox{\text-}\Mod, \qquad \mathsf{V} 
\mapsto \mathsf{M}_\mathsf{V}.  \end{equation} 

Conversely, for each $\mathsf{M}\in {\mathcal C}_\mathcal O$-$\Mod$, 
let $\mathsf{V}_\mathsf{M}:=\bigoplus_{\mathfrak n\in
{\mathcal O}}\mathsf{M}(\mathfrak n)$ with the action of $\mathsf{A}$ on $\mathsf{V}_\mathsf{M}$ specified by 
$dv:=\phi_{\mathfrak n}^{-1}(\overline{d})v$,\ $\overline d = d + \mathfrak n \in
\Df/\mathfrak n$, 
$X_iv:=X_{\mathfrak n,i}v$, and
$Y_iv:=Y_{\phi_i^{-1}(\mathfrak n),i}v$ for 
$v\in \mathsf{M}(\mathfrak n)$.   Then $\mathsf{V}_\mathsf{M}= \bigoplus_{\mathfrak n \in \mathcal O}  (\mathsf{V}_\mathsf{M})_{\mathfrak n}
\in {\mathcal W}_{{\mathcal O}}(\mathsf{A})$,  where 
$ (\mathsf{V}_\mathsf{M})_{\mathfrak n} = \mathsf{M}(\mathfrak n)$.   
Thus,  
\begin{equation} \mathsf{F}^{\prime}:{\mathcal C}_{\mathcal O}\mbox{\text-}\Mod
\rightarrow 
{\mathcal W}_{{\mathcal O}}(\mathsf{A}), \qquad \mathsf{M} \mapsto \mathsf{V}_\mathsf{M},
\label{fprime}\end{equation} is a functor which is 
inverse to $\mathsf{F}$.    \end{proof} 
\vspace{.05truein}

 \begin{pgraph} {\rm We introduce an
equivalence relation $\sim$ on the set of 
maximal ideals  in $\mathfrak {max} D$.  This relation is the
transitive 
extension of the relation specified by the following
conditions: \ $\mathfrak n 
\sim \mathfrak n$; \ $\mathfrak n \sim \mathfrak p$ implies $\mathfrak p \sim \mathfrak n$;  and  $\mathfrak n \sim \phi_i(\mathfrak n)$ if and 
only if $t_i \not \in \mathfrak n$.} \end{pgraph}

\begin{lem}\label{lem:iso}  Assume $\mathfrak n$ and $\mathfrak p$ belong to $\mathcal O$.  Then $\mathfrak n
\sim \mathfrak p$ if and only if $\mathfrak n $ and $\mathfrak p$ are isomorphic in $\mathcal C_{\mathcal O}$.    \end{lem}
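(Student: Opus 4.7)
The direction $\mathfrak n\sim\mathfrak p \Rightarrow \mathfrak n\cong\mathfrak p$ is immediate from Definition~\ref{defn:co}. Since isomorphism is transitive, it suffices to treat a single generating step $\mathfrak n\sim\phi_i(\mathfrak n)$; by hypothesis $t_i\notin\mathfrak n$, so $\overline t_i$ is nonzero in $\Df/\mathfrak n$, and therefore $c \eqdef \phi_{\mathfrak n}^{-1}(\overline t_i)$ is a nonzero scalar in $\Df/\mathfrak m\cong\K$. The relations~\eqref{eq:corels} then read $Y_{\mathfrak n,i}\circ X_{\mathfrak n,i} = c\cdot 1_{\mathfrak n}$ and $X_{\mathfrak n,i}\circ Y_{\mathfrak n,i} = c\cdot 1_{\phi_i(\mathfrak n)}$, exhibiting $c^{-1}Y_{\mathfrak n,i}$ as a two-sided inverse of $X_{\mathfrak n,i}$.

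For the converse, let $f\colon\mathfrak n\to\mathfrak p$ and $g\colon\mathfrak p\to\mathfrak n$ be mutually inverse morphisms. Since $\Phi$ acts freely on $\mathcal O$ (as noted in~\ref{subsec:co}), there is a unique $\un k\in\Z^n$ with $\mathfrak p = \phi^{\un k}(\mathfrak n)$, and every composition of generators $X_{\cdot,j}$, $Y_{\cdot,j}$ from $\mathfrak n$ to $\mathfrak p$ has net $\phi$-displacement $\un k$. The plan is to introduce a canonical \emph{direct} composition $D_{\mathfrak n,\mathfrak p}\in\mathcal C_{\mathcal O}(\mathfrak n,\mathfrak p)$, assembled from $|k_j|$ moves of type $X_j$ when $k_j>0$ or $Y_j$ when $k_j<0$, and to show that every morphism from $\mathfrak n$ to $\mathfrak p$ reduces modulo the defining relations to a scalar multiple of $D_{\mathfrak n,\mathfrak p}$. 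Then $f=\alpha D_{\mathfrak n,\mathfrak p}$ and $g=\beta D_{\mathfrak p,\mathfrak n}$ for some $\alpha,\beta\in\K$, and collapsing adjacent $Y_{\mathfrak q,i}\circ X_{\mathfrak q,i}$ pairs iteratively along the direct round trip yields
\[
1_{\mathfrak n}\ =\ g\circ f\ =\ \alpha\beta\,\Bigl(\prod_{(\mathfrak q,\,i)}\phi_{\mathfrak q}^{-1}(\overline t_i)\Bigr)\,1_{\mathfrak n},
\]
where $(\mathfrak q,i)$ ranges over the intermediate ideals and step-directions of the direct path. Every factor of the product must therefore be nonzero, i.e., $t_i\notin\mathfrak q$ at each step, which is precisely the statement that each step is an elementary $\sim$-move; hence $\mathfrak n\sim\mathfrak p$.

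The main obstacle will be the normal-form reduction. It depends on the commutation relations $X_{\cdot,j}X_{\cdot,j'}=X_{\cdot,j'}X_{\cdot,j}$ and $X_{\cdot,j}Y_{\cdot,j'}=Y_{\cdot,j'}X_{\cdot,j}$ for $j\neq j'$, which must be implicit in $\mathcal C_{\mathcal O}$ in order for the equivalence of Proposition~\ref{prop1} to hold; they are inherited from (W3)--(W4) in $\mathsf A = \Df(\un\phi,\un t)$. Granting these, the reduction follows by induction on composition length: commute any detour pair $Y_{\cdot,i}X_{\cdot,i}$ (or $X_{\cdot,i}Y_{\cdot,i}$) into adjacency using the mixed-index commutations and collapse it via~\eqref{eq:corels} into a scalar. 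A subsidiary well-definedness point is that $D_{\mathfrak n,\mathfrak p}$ does not depend on the chosen ordering of its component moves and that the displayed product is correspondingly order-invariant; both follow from the same commutation relations.
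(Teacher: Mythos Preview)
Your forward direction is identical to the paper's.

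For the converse, the paper argues only the adjacent case $\mathfrak p=\phi_i(\mathfrak n)$ explicitly: it asserts the factorizations
\[
\mathcal C_{\mathcal O}(\mathfrak n,\phi_i(\mathfrak n))=\mathcal C_{\mathcal O}(\phi_i(\mathfrak n),\phi_i(\mathfrak n))\,X_{\mathfrak n,i}=X_{\mathfrak n,i}\,\mathcal C_{\mathcal O}(\mathfrak n,\mathfrak n)
\]
(and the analogous ones for $Y$), concludes that $X_{\mathfrak n,i}$ and $Y_{\mathfrak n,i}$ are themselves isomorphisms, and derives a contradiction from $Y_{\mathfrak n,i}X_{\mathfrak n,i}=0$ if $t_i\in\mathfrak n$. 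Your argument is the natural globalization of this: you push the factorization all the way to the statement that $\mathcal C_{\mathcal O}(\mathfrak n,\mathfrak p)=\K\cdot D_{\mathfrak n,\mathfrak p}$, and then read off the nonvanishing of every intermediate $\phi_{\mathfrak q}^{-1}(\overline t_i)$ from the round-trip identity. This is the same mechanism, carried out more completely; the paper leaves the passage from the adjacent case to general $\mathfrak p$ to the reader.

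Your caveat about the mixed-index commutation relations is well taken. Definition~\ref{defn:co} as written lists only the same-index relations $Y_{\mathfrak n,i}X_{\mathfrak n,i}=\phi_{\mathfrak n}^{-1}(\overline t_i)1_{\mathfrak n}$ and $X_{\mathfrak n,i}Y_{\mathfrak n,i}=\phi_{\mathfrak n}^{-1}(\overline t_i)1_{\phi_i(\mathfrak n)}$; the commutations for $j\neq i$ are needed both for your normal-form reduction and for the functor $\mathsf F'$ in Proposition~\ref{prop1} to land in $\mathsf A$-modules, and they do appear explicitly in the skeleton description $\mathfrak A(\K,J)$ later in the paper. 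So you are right that they are implicit (and present in the references \cite{BBF}, \cite{DGO}); treating them as part of the definition, your reduction argument goes through and the proof is correct.
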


\begin{proof}  We assume $\mathfrak m$ is the designated maximal ideal of
$\mathcal O$ having the largest number of breaks.    For $\mathfrak n \in \mathcal O$, let
$\phi_{\mathfrak n}$ denote the unique element of the group $\Phi$ such that
$\phi_{\mathfrak n}(\mathfrak m) = \mathfrak n$.       \m

 \noindent ($\Longrightarrow$) \  It suffices to consider the case  that  $\mathfrak n \sim 
\phi_i(\mathfrak n)$.   Then  $t_i \not \in \mathfrak n$  
so that  $t_i + \mathfrak n \neq 0$, and 
since $\phi_{\mathfrak n}^{-1}: \Df/\mathfrak n \rightarrow \Df/\mathfrak m$ is an isomorphism, 
$\phi_{\mathfrak n}^{-1}(t_i + \mathfrak n) \neq 0$.   
It follows from  \eqref{eq:corels}  that  
$X_{\mathfrak n,i}$ and $Y_{\mathfrak n,i}$ are invertible in ${\mathcal C}_\mathcal O$.  
Hence $\mathfrak n$ and $\phi_i(\mathfrak n)$ are isomorphic in ${\mathcal C}_\mathcal O$.
\smallskip

\noindent ($\Longleftarrow$)   Assume now
that $\mathfrak n$ and $\phi_i(\mathfrak n)$
are isomorphic in ${\mathcal C}_{\mathcal O}$.
Then it 
follows {f}rom the definition of the morphisms 
in ${\mathcal C}_{\mathcal O}$ that 
\begin{eqnarray*} &&{\mathcal C}_{\mathcal O}(\mathfrak n,\phi_i(\mathfrak n)) = {\mathcal 
C}_{\mathcal O}(\phi_i(\mathfrak n),\phi_i(\mathfrak n))X_{\mathfrak n,i}=
X_{\mathfrak n,i}{\mathcal C}_{\mathcal O}(\mathfrak n,\mathfrak n)  \qquad \hbox{\rm and} \\
&&{\mathcal C}_{\mathcal O}(\phi_i(\mathfrak n),\mathfrak n) = 
{\mathcal C}_{\mathcal O}(\mathfrak n,\mathfrak n)Y_{\mathfrak n,i}=
Y_{\mathfrak n,i}{\mathcal C}_{\mathcal O}(\phi_i(\mathfrak n),\phi_i(\mathfrak n)) \end{eqnarray*}
\noindent for $1\leq 
i\leq n$.  
Hence $X_{\mathfrak n,i}$ and $Y_{\mathfrak n,i}$ are isomorphisms.  
If $t_i \in \mathfrak n$, then $\phi_{\mathfrak n}^{-1}(t_i + \mathfrak n) = 0
\in \Df/\mathfrak m$ and 
$Y_{\mathfrak n,i}X_{\mathfrak n,i} = \phi_{\mathfrak n}^{-1}(t_i+\mathfrak n) 1_{\mathfrak n} =
0$, a 
contradiction.  Thus, $t_i \not \in \mathfrak n$ and
$\mathfrak n \sim \phi_i(\mathfrak n)$.   \end{proof}
\m
 
\begin{subsec}{\bf A skeleton of \ $\mathcal C_{\mathcal O}$}  \end{subsec}

A category ${\mathcal C}$ is said to be {\em basic} if 
\begin{itemize}
\item[{$\bullet$}]  all its objects are pairwise nonisomorphic;

\item[{$\bullet$}]  for each object $\alpha$, there are no
nontrivial idempotents
in ${\mathcal C}(\alpha,\alpha)$.
\end{itemize}  

 A full subcategory ${\mathcal S}$ is a {\em skeleton}
of a category
${\mathcal C}$ if it is basic, and each object $\alpha
\in 
\Ob{\mathcal C}$ is isomorphic to a direct summand of
a (finite) direct sum of some
objects of ${\mathcal S}$.    The natural inclusion
functor 
${\mathcal I}: {\mathcal S} \rightarrow {\mathcal C}$
of a skeleton 
${\mathcal S}$ into ${\mathcal C}$ is an equivalence
of categories.  By this
functor, ${\mathcal C}$ becomes a 
${\mathcal C}-{\mathcal S}$-bimodule in an obvious
way.  Tensoring this bimodule over
${\mathcal S}$ furnishes equivalences ${\mathcal
S}$-$\Mod \rightarrow
{\mathcal C}$-$\Mod$.  This is a reformulation of
Morita equivalence in the
categorical context.    \smallskip

Next we identify a skeleton for each category $\mathcal C_{\mathcal O}$.  
As above, let ${\mathfrak m}$ be the designated maximal ideal with the largest
number of breaks  in the orbit $\mathcal O$.      If the set $J$ of breaks for $\mathfrak m$ is
empty,    set $\mathcal B_{\mathcal O} = \{ \mathfrak m\}$.   If  $J = \{j_1,\dots, j_q\}$
is nonempty, then let  $\mathcal B_{\mathcal O} = \{ \phi^\alpha(\mathfrak m):= \phi_{j_1}^{\alpha_1} \cdots \phi_{j_q}^{\alpha_q} (\mathfrak m)
\mid \alpha_j \in \{0,1\} \ \hbox{\rm for all} \ j \in J\}$.    For each maximal ideal $\phi^\alpha (\mathfrak m)$ in $\mathcal B_{\mathcal O}$,   define 

\begin{equation}\label{eq:opdef}  \mathcal O_{\alpha} = \left \{ \phi_1^{k_1} \cdots \phi_n^{k_n}\phi^\alpha(\mathfrak m) \ \Bigg | \
 k_j \in  \begin{cases}  \mathbb Z &  \ \hbox{\rm if} \  j \in J^c,  \\
\mathbb Z_{\leq 0}& \ \hbox{\rm if  \ 
$j \in J$ and  $\alpha_j = 0$},  \\
\mathbb Z_{\geq 0} &  \ \hbox{\rm if  \ 
$j \in J$ and  $\alpha_j = 1$}.  \end{cases} \right \}\end{equation}
\m

\begin{rem} {\rm For $\phi^\alpha(\mathfrak m) \in \mathcal{B}_{\mathcal O}$, the
equivalence class of
$\phi^\alpha(\mathfrak m)$ is exactly ${\mathcal O}_{\alpha}$, and $\mathcal O$ is the disjoint
union of the sets
$\mathcal O_{\alpha}$,  $\alpha \in \{0,1\}^{|J|}$.} \end{rem}
\m

 \begin{prop} \label{cor:one} The full
subcategory ${\mathcal 
S}_{\mathcal O}$ of $\mathcal C_{\mathcal O}$  with $\Ob{\mathcal S}_{\mathcal O} = \mathcal{B}_{\mathcal O}$
is a skeleton of 
the category $\mathcal C_{\mathcal O}$.  \end{prop}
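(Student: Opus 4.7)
The plan is to verify directly the two requirements defining a skeleton, namely that $\mathcal{S}_{\mathcal{O}}$ is basic and that every object of $\mathcal{C}_{\mathcal{O}}$ is isomorphic to a direct summand of a finite direct sum of objects of $\mathcal{S}_{\mathcal{O}}$. Concretely, I would establish three assertions, in this order: (a) the objects of $\mathcal{B}_{\mathcal{O}}$ are pairwise non-isomorphic in $\mathcal{C}_{\mathcal{O}}$; (b) every $\mathfrak{n} \in \mathcal{O}$ is isomorphic in $\mathcal{C}_{\mathcal{O}}$ to some (necessarily unique) $\phi^\alpha(\mathfrak{m}) \in \mathcal{B}_{\mathcal{O}}$; and (c) the endomorphism algebra of each $\phi^\alpha(\mathfrak{m})$ admits no nontrivial idempotents.

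Assertions (a) and (b) come almost for free from Lemma \ref{lem:iso} combined with the Remark immediately preceding the proposition. The Remark records that the $\sim$-equivalence class of $\phi^\alpha(\mathfrak{m})$ is precisely $\mathcal{O}_\alpha$ and that $\mathcal{O}$ is the disjoint union of the $\mathcal{O}_\alpha$, while Lemma \ref{lem:iso} identifies $\sim$ with the relation of being isomorphic in $\mathcal{C}_{\mathcal{O}}$. Therefore distinct elements of $\mathcal{B}_{\mathcal{O}}$ sit in distinct equivalence classes, settling (a), and each $\mathfrak{n} \in \mathcal{O}$ lies in exactly one $\mathcal{O}_\alpha$ and so is isomorphic in $\mathcal{C}_{\mathcal{O}}$ to the corresponding $\phi^\alpha(\mathfrak{m})$, settling (b). This also handles the ``direct summand'' clause in the definition of a skeleton, since every object of $\mathcal{C}_{\mathcal{O}}$ is in fact already isomorphic to a single object of $\mathcal{S}_{\mathcal{O}}$.

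For (c), I would compute $\mathcal{C}_{\mathcal{O}}(\phi^\alpha(\mathfrak{m}), \phi^\alpha(\mathfrak{m}))$ directly from Definition \ref{defn:co}. Every morphism of $\mathcal{C}_{\mathcal{O}}$ is a $\K$-linear combination of composable words in the generators $X_{\mathfrak{n},i}$ and $Y_{\mathfrak{n},i}$. Because the group $\Phi$ acts freely on $\mathcal{O}$ (as noted at the start of Subsection \ref{subsec:co}), any closed word at $\phi^\alpha(\mathfrak{m})$ contains, for each index $i$, the same number of generators $X_{?,i}$ as of $Y_{?,i}$. Using the commutation structure inherited from (R2)--(R4) of $\mathsf{A}$, such a word can be rearranged so that some matching $X_{?,i}$--$Y_{?,i}$ pair appears adjacently, and then the relations $Y_{\mathfrak{n},i}X_{\mathfrak{n},i} = \phi_{\mathfrak{n}}^{-1}(\overline t_i) 1_{\mathfrak{n}}$ and $X_{\mathfrak{n},i}Y_{\mathfrak{n},i} = \phi_{\mathfrak{n}}^{-1}(\overline t_i) 1_{\phi_i(\mathfrak{n})}$ from \eqref{eq:corels} collapse that pair to a scalar in $\K$. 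Iterating, every closed word reduces to a scalar multiple of $1_{\phi^\alpha(\mathfrak{m})}$, so $\mathcal{C}_{\mathcal{O}}(\phi^\alpha(\mathfrak{m}),\phi^\alpha(\mathfrak{m})) \cong \K$ as a $\K$-algebra, and the only idempotents are $0$ and $1_{\phi^\alpha(\mathfrak{m})}$.

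The main obstacle I anticipate is the bookkeeping in (c): one must check that the rearrangement by commutation relations is legitimate inside $\mathcal{C}_{\mathcal{O}}$ and that the scalar obtained from the successive collapses is independent of the order in which pairs are chosen. The freeness of the $\Phi$-action, which rigidly constrains the set of closed words, is exactly what makes this work; the same template is used for the analogous statements in \cite{DGO} and \cite{BBF}, which I would follow for the remaining technical details.
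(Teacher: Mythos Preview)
Your argument for (a) and (b) is exactly the paper's: it too invokes Lemma~\ref{lem:iso} and the break-set description of the $\phi^\alpha(\mathfrak m)$ to see that the objects of $\mathcal B_{\mathcal O}$ are pairwise non-isomorphic and that every $\mathfrak n\in\mathcal O$ is isomorphic to one of them. The only difference is cosmetic: the paper computes the break set of each $\phi^\alpha(\mathfrak m)$ directly (it is $\{j_k\mid \alpha_k=0\}$) and reads off the $\alpha$ attached to a given $\mathfrak n=\phi_1^{\ell_1}\cdots\phi_n^{\ell_n}(\mathfrak m)$ explicitly ($\alpha_j=1$ iff $\ell_j\ge 1$), whereas you package both facts into the Remark about the partition $\mathcal O=\bigsqcup_\alpha\mathcal O_\alpha$.

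Where you go beyond the paper is part~(c). The paper's proof simply does not check the ``no nontrivial idempotents'' clause of its own definition of \emph{basic}; that is effectively deferred to Proposition~\ref{skelrel}, where the isomorphism $\mathcal S_{\mathcal O}\cong\mathfrak A(\Df/\mathfrak m,J)$ makes the endomorphism algebras visibly equal to $\K$. Your direct computation of $\mathcal C_{\mathcal O}(\phi^\alpha(\mathfrak m),\phi^\alpha(\mathfrak m))$ is a reasonable alternative, but be aware of one wrinkle: the commutation relations you invoke (those coming from (R4)/(W3)--(W4), allowing $X_{?,i}$ to slide past $X_{?,j}$ or $Y_{?,j}$ for $i\neq j$) are \emph{not} listed among the relations in Definition~\ref{defn:co} as written. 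They are, however, necessary for the inverse functor $\mathsf F'$ in Proposition~\ref{prop1} to land in $\mathsf A$-modules and for Proposition~\ref{skelrel} to hold, so they are surely intended (and are present in the analogous constructions in \cite{DGO} and \cite{BBF}). With that understood, your reduction of closed words to scalars is correct, and your anticipated ``bookkeeping obstacle'' is exactly the point where one must appeal to those implicit commutation relations rather than to \eqref{eq:corels} alone.
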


\begin{proof}   Assume $\mathfrak m$ is the designated maximal ideal in
$\mathcal O$, and the 
corresponding set of breaks is $J =
\{j_1, \dots, j_q\}$  (which  may be empty).
If $J \neq \emptyset$,  then 
$\phi^\alpha(\mathfrak m) =\phi_{j_1}^{\alpha_1}\cdots\phi_{j_q}^{\alpha_q}(\mathfrak m)$
has
breaks at $\{j_k \mid \alpha_k = 0\}$.  Thus, these
objects
are pairwise nonisomorphic.  Moreover, if $\mathfrak n =
\phi_1^{\ell_1} \cdots 
\phi_n^{\ell_n}(\mathfrak m) \in \mathcal O$,   then by Lemma \ref{lem:iso}, $\mathfrak n$
is isomorphic to 
$\phi_{j_1}^{\alpha_1}\cdots\phi_{j_q}^{\alpha_q}(\mathfrak m)
\in 
{\mathcal B}_{\mathcal O}$ such that $\alpha_j = 1$ whenever $j
\in J$ and $\ell_j \geq 1$,   and $\alpha_j = 0$ otherwise.  If  $J = \emptyset$  and 
${\mathcal B}_{\mathcal O} = \{\mathfrak m\}$,  then 
every element of 
$\mathcal O$ is isomorphic to $\mathfrak m$ by the lemma. 
Consequently, in both cases 
${\mathcal B}_{\mathcal O}$ is a skeleton of $\mathcal C_{\mathcal O}$.  \end{proof} 
\smallskip

\begin{subsec} {\bf A quiver description of the
skeleton} \end{subsec}  
 
For a  field $\K$ and an arbitrary subset $J$ 
of positive integers, we define the
category 
${\mathfrak A}={\mathfrak A}(\K, J)$ as the $\K$-linear category with
the set of objects 
$\Ob{\mathfrak A}:=\{0,1\}^{|J|}$ generated (over $\K$) by the set of morphisms 
${\mathfrak A}_1:=\{a_{\alpha,j},b_{\alpha,j}\mid {\alpha}\in \Ob{\mathfrak A},\
j \in J \}$, where $a_{{\alpha}, j}: {\alpha}\to \beta$ and
$b_{{\alpha}, j}: 
\beta\to \alpha$  are such that $\beta_k=\alpha_k$ for all $k\ne j$,  $\alpha_j = 0$
and $\be_j=1$, 
subject to the relations:

\begin{itemize}
\item $a_{{\alpha}, j}b_{{\alpha}, j}=b_{{\alpha}, j}a_{{\alpha},
j}=0$ \ 
for each $a_{\alpha,j}, b_{\alpha,j} \in {\mathfrak A}_1$; 
\item $u_{{\alpha}, j}v_{{\be}, k}-v_{{\gamma},
k}u_{{\delta}, j}=0$ \
for all $k\neq j$
and all possible $u,v\in \{a, b\}$,
${\alpha,\beta,\gamma,\delta}\in \Ob{\mathfrak A}$, 
for which
the last equality makes sense.
\end{itemize}
 
\smallskip

\noindent  When $J$ is empty, let
${\mathfrak A}\,(\K,\emptyset)$ be the category with 
a unique object, say $\omega$, and with morphism set
$\K 1_\omega$.

\smallskip

The $\K$-algebra corresponding to the category
${\mathfrak A}(\K, 
J)$ above consists of finite $\K$-linear combinations
of morphisms in the 
category, and the product is simply composition of
morphisms whenever 
it is defined and is 0 otherwise.  It has a unit
element.  We adopt the same notation ${{\mathfrak
A}}(\K,J)$ for the algebra, 
as it will be evident {f}rom the context which one is
meant.  When $J$ is nonempty, say $J=\{j_1, \cdots, j_q\}$, then ${\mathfrak
A}(\K,J)\cong {\mathfrak A}(\K, 
\{j_1\})\otimes \cdots \otimes {\mathfrak
A}(\K,\{j_q\})$.  It is easy to 
see that algebra ${\mathfrak A}(\K,\{j\})$ is isomorphic to the
algebra 
$\overline {\mathfrak Q}_1:= \K {\mathfrak
Q}_1/{\mathfrak R}$ corresponding to the 
following quiver and relations:

\begin{picture}(0.00,40.00)
\put(00.00,17.00){${\mathfrak Q}_1:$}
\put(87.00,27.00){$a$} 
\put(87.00,6.50){$b$} \put(69.00,27.00){$1$}
\put(101.00,7.00){$2$} 
\put(180.00,17.00){$ab=ba=0.$}
\put(69.50,17.00){$\circ$} 
\put(101.00,17.00){$\circ$}
\put(74.00,22.00){\vector(1,0){27.00}} 
\put(101.50,16.50){\vector(-1,0){27.00}}
\end{picture}

\noindent As $\overline {\mathfrak Q}_1$ is generated
over $\K$ by $1_1, 1_2, a, 
b$, modulo the relations $ab 
= ba = 0$, it has dimension 4. (Here and throughout
the paper we do not
list obvious relations such as $a^2 = 0$, $1_1^2 =
1_1$, 
$a 1_1 = a$, etc.)     \smallskip

\begin{prop} \label {skelrel} Let $\mathsf{A}  = \Ars = \Df(\un \phi,\un t)$, and   
assume $\mathcal O$ is an orbit in $\mathfrak{max}\Df$
under the 
automorphism group $\Phi$ generated by the $\phi_i$.
Let $\mathfrak m$ be the designated maximal ideal of $\mathcal O$ having the
largest set of breaks.   Then
${\mathcal S}_{\mathcal O} \cong {\mathfrak A}(\Df/\mathfrak m, J)$,
where $\mathcal S_{\mathcal O}$ is the full 
subcategory of  $\mathcal C_{\mathcal O}$ in
Proposition \ref{cor:one}  and $J$ is the set of breaks of $\mathfrak m$.  
  \end{prop}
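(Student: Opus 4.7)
The plan is to construct an explicit isomorphism of $\K$-linear categories $F\colon \mathfrak A(\Df/\mathfrak m, J) \to \mathcal S_{\mathcal O}$. On objects, I would identify $\{0,1\}^{|J|}$ with $\mathcal B_{\mathcal O}$ via $\alpha \mapsto \phi^\alpha(\mathfrak m)$, which is a bijection by the very definition of $\mathcal B_{\mathcal O}$. On the generating morphisms of $\mathfrak A(\Df/\mathfrak m, J)$, for each $j \in J$ and $\alpha$ with $\alpha_j = 0$, I would set $F(a_{\alpha,j}) := X_{\phi^\alpha(\mathfrak m), j}$ and $F(b_{\alpha,j}) := Y_{\phi^\alpha(\mathfrak m), j}$. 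These are morphisms in $\mathcal S_{\mathcal O}$ because $\phi_j(\phi^\alpha(\mathfrak m)) = \phi^{\alpha + \e_j}(\mathfrak m)$ again lies in $\mathcal B_{\mathcal O}$.

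Next, I would check that $F$ respects the defining relations of $\mathfrak A(\Df/\mathfrak m, J)$. The relations $a_{\alpha,j} b_{\alpha,j} = b_{\alpha,j} a_{\alpha,j} = 0$ hold because $\phi^\alpha(\mathfrak m)$ has a break at $j$ whenever $\alpha_j = 0$ (as observed in the proof of Proposition~\ref{cor:one}), so $t_j \in \phi^\alpha(\mathfrak m)$ and hence $\overline t_j = 0$; the relations \eqref{eq:corels} then immediately force both $Y_{\phi^\alpha(\mathfrak m),j} X_{\phi^\alpha(\mathfrak m),j}$ and $X_{\phi^\alpha(\mathfrak m),j} Y_{\phi^\alpha(\mathfrak m),j}$ to vanish. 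The commutation relations for $j \neq k$ are inherited from $X_i X_j = X_j X_i$, $Y_i Y_j = Y_j Y_i$, and $X_i Y_j = Y_j X_i$ (for $i \neq j$) in the generalized Weyl algebra $\mathsf A$, which lift to the corresponding identities in $\mathcal C_{\mathcal O}$. Hence $F$ is a well-defined $\K$-linear functor.

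It remains to show $F$ is bijective on each Hom space. Since $\mathfrak A(\Df/\mathfrak m, J) \cong \bigotimes_{j \in J} \mathfrak A(\Df/\mathfrak m, \{j\})$ with each tensor factor isomorphic to the 4-dimensional algebra $\overline {\mathfrak Q}_1$, every Hom space $\mathfrak A(\alpha, \beta)$ is one-dimensional over $\Df/\mathfrak m$, spanned by the tensor product of the unique generator in each coordinate. On the target side, any morphism in $\mathcal S_{\mathcal O}(\phi^\alpha(\mathfrak m), \phi^\beta(\mathfrak m))$ is a $\K$-linear combination of compositions of the $X_{\cdot, i}, Y_{\cdot, i}$; by the commutation relations, any such composition reorders into a product of direction-$i$ factors. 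For $i \notin J$ the direction-$i$ factor is a loop (since $\alpha_i = \beta_i = 0$), and as no $\mathfrak n \in \mathcal O$ has a break at $i$, each $Y_{\mathfrak n, i} X_{\mathfrak n, i}$ is a nonzero scalar by \eqref{eq:corels}, so the loop collapses to a unit scalar times an identity. For $i = j \in J$ the factor is a walk from $\alpha_j$ to $\beta_j$ in $\{0,1\} \subset \mathbb Z$; the unique break in the $j$-direction occurs at $\mathfrak m$ itself (since $\phi_j^{\pm k}(\mathfrak m)$ has no break at $j$ for $k \geq 1$, using $(r_j s_j^{-1})^2 \neq 1$ and Assumption~\ref{asspt1}), so any excursion crossing this break annihilates the composition via \eqref{eq:corels}, while excursions staying on one side contribute nonzero scalars. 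Consequently every morphism in $\mathcal S_{\mathcal O}(\phi^\alpha(\mathfrak m), \phi^\beta(\mathfrak m))$ is a scalar multiple of the image under $F$ of the spanning element of $\mathfrak A(\alpha, \beta)$; combined with the one-dimensionality of the domain Hom space, $F$ is bijective.

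The main obstacle is the reduction of an arbitrary walk in direction $j \in J$ to canonical form: one must verify that the break at $\mathfrak m$ and the absence of breaks at $\phi_j^k(\mathfrak m)$ for $k \neq 0$ conspire to annihilate precisely the correct compositions and rescale the rest by units, so that the resulting factor agrees with the image under $F$ of the corresponding generator of $\mathfrak A(\Df/\mathfrak m, \{j\})$. Analogous decompositions underlie the arguments in \cite{BBF} and \cite{DGO}.
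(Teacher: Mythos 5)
Your construction is exactly the paper's functor $\mathsf{G}$ of \eqref{g1}--\eqref{g2}, and your verification that the defining relations of ${\mathfrak A}(\Df/\mathfrak m, J)$ are respected and that $F$ is bijective on Hom spaces is precisely the detail the paper compresses into ``{f}rom subsection (\ref{subsec:co}) it is easy to see that this is an isomorphism,'' so the approach is the same. Two small points worth tightening: faithfulness also requires that the canonical generator of each $\mathcal S_{\mathcal O}(\phi^\alpha(\mathfrak m),\phi^\beta(\mathfrak m))$ be nonzero (which one can check by letting it act on a module such as $\mathsf{Z}(\mathcal O,\alpha)$), and the commutation relations you use for morphisms in different directions should be read as part of the intended presentation of $\mathcal C_{\mathcal O}$ in Definition \ref{defn:co} (as in \cite{BBF}) rather than as identities ``lifted'' from the algebra $\mathsf{A}$, since $\mathcal C_{\mathcal O}$ is defined by generators and relations.
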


\begin{proof}  Proposition \ref{cor:one} shows that the category
${\mathcal S}_{\mathcal O}$ with 
objects $\mathcal B_{\mathcal O}$ is a skeleton of ${\mathcal
C}_{\mathcal O}$.    When $J$ is nonempty, say $J = \{j_1,\dots,j_q\}$, 
define the functor  \  $\mathsf{G}: {\mathfrak A}(\Df/\mathfrak m, J) 
\rightarrow {\mathcal S}_{\mathcal O}$  \  as follows:

\begin{equation}\mathsf{G}(\alpha) =  \phi^\alpha(\mathfrak m) \qquad \mathsf{G}(a_{\alpha,j}) = 
X_{\phi^{\alpha}(\mathfrak m),j}, \qquad  \mathsf{G}(b_{\alpha,j}) = 
Y_{\phi^{\alpha}(\mathfrak m),j},  \label{g1}  \end{equation}
\noindent where $ \phi^\alpha =\prod_{j_i \in J} \phi_{j_i}^{\alpha_i}$.
 {F}rom 
subsection (\ref{subsec:co}) it is easy to see that this
is an isomorphism.   Now suppose that $J = \emptyset$.  Then  the functor $\mathsf{G}: 
{\mathfrak A}(\K,\emptyset) \rightarrow {\mathcal S}_{\mathcal O}$ is 
defined by
\begin{equation} \mathsf{G}(\omega) = \mathfrak m, \qquad \mathsf{G}(1_\omega) =
1_{\mathfrak m}.   \label{g2}
\end{equation} \end{proof}

Proposition \ref{skelrel} allows us to focus on
the algebras 
of the form ${\mathfrak A}(\K, J)$, where $J$ is a subset
of $\{1,\dots, n\}$.     For each
$\alpha \in 
\Ob{\mathfrak A}(\K, J)$, define a simple ${\mathfrak A}(\K, J)$-module
${\mathfrak S}_\alpha$ such 
that ${\mathfrak S}_\alpha(\beta)=\delta_{\alpha,\beta}\K$ for all
objects $\beta 
\in \Ob{\mathfrak A}(\K, J)$, and let all morphisms be
trivial.   Then the following result is clear.

\smallskip

\begin{prop}\label{aprop} 
Any simple module over the algebra ${\mathfrak A} ={\mathfrak A}(\K,J)$
is isomorphic to 
$\mathfrak S_{\alpha}$ for some object $\alpha\in \Ob{\mathfrak A}(\K,
J)$. \end{prop}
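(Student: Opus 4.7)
The plan is to identify the Jacobson radical of $\mathfrak{A} := \mathfrak{A}(\K, J)$ and then observe that the quotient by the radical is a product of copies of $\K$ indexed by $\Ob \mathfrak{A}$, so that simple modules correspond bijectively to the $\mathfrak{S}_\alpha$.

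First I would dispose of the degenerate case: if $J = \emptyset$, then $\mathfrak{A} = \K 1_\omega \cong \K$ and the only simple is $\mathfrak{S}_\omega$, so the claim is immediate. Assume henceforth that $J = \{j_1, \ldots, j_q\}$ is nonempty, and let $I$ be the two-sided ideal of $\mathfrak{A}$ generated by all the arrows $a_{\alpha, j}, b_{\alpha, j}$.

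The key technical step is to show that $I$ is nilpotent, more precisely $I^{q+1} = 0$. Given a word $w = u_1 u_2 \cdots u_N$ in the arrows, the commutation relations $u_{\alpha, j}\,v_{\beta, k} = v_{\gamma, k}\,u_{\delta, j}$ (valid for $j \neq k$) preserve the source and target of the composite and can be applied repeatedly to bring any two arrows sharing an index $j$ into adjacent position. Once two such arrows are adjacent, a direct case analysis using the quiver structure shows that either they fail to be composable (so $w = 0$ by the path algebra axioms), or they take the form $a_{\alpha, j} b_{\alpha, j}$ or $b_{\alpha, j} a_{\alpha, j}$, both of which vanish by the first bulleted relation. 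Hence every nonzero word uses each $j \in J$ at most once, so words of length exceeding $q$ vanish and $I^{q+1} = 0$.

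Once $I$ is known to be nilpotent, the quotient $\mathfrak{A}/I$ is spanned by the images of the pairwise orthogonal idempotents $1_\alpha$ (which sum to the identity of $\mathfrak{A}$), and is therefore isomorphic to $\prod_{\alpha \in \Ob \mathfrak{A}} \K \cong \K^{2^{|J|}}$, a commutative semisimple algebra. This identifies $I$ with $\mathrm{rad}(\mathfrak{A})$. A simple $\mathfrak{A}$-module $M$ is then annihilated by the radical and becomes a simple module over the finite product of fields $\K^{2^{|J|}}$; such modules are all one-dimensional and correspond to projection onto a single factor, so $M \cong \mathfrak{S}_\alpha$ for a unique $\alpha \in \Ob \mathfrak{A}$. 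The main obstacle will be the nilpotency argument for $I$, which requires careful bookkeeping with the commutation relations; everything after that is formal Wedderburn theory.
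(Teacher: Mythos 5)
Your argument is correct and complete. The paper itself offers no proof of this proposition --- it simply asserts that the result ``is clear'' after defining the modules $\mathfrak S_\alpha$ --- so you are supplying the missing justification rather than paralleling an existing one. Your route (show the arrow ideal $I$ is nilpotent via the commutation relations plus $ab=ba=0$, identify $\mathfrak A/I$ with the split semisimple algebra $\prod_{\alpha}\K 1_\alpha$, and conclude that every simple is a vertex simple $\mathfrak S_\alpha$) is sound: the key point that any composable word repeating an index $j$ can be reordered to exhibit an adjacent pair $a_{\alpha,j}b_{\alpha,j}$ or $b_{\alpha,j}a_{\alpha,j}$ works because from each object there is exactly one index-$j$ arrow with a given source, so two adjacent composable index-$j$ arrows are forced to be one of the two relations that vanish. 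The argument the paper most likely had in mind is slightly different and slicker: it has already observed that $\mathfrak A(\K,J)\cong \mathfrak A(\K,\{j_1\})\otimes\cdots\otimes\mathfrak A(\K,\{j_q\})$ and that each factor is the $4$-dimensional quiver algebra $\overline{\mathfrak Q}_1$ with $ab=ba=0$, whose radical is spanned by $a,b$ with semisimple quotient $\K\times\K$; since simples of a tensor product of finite-dimensional split algebras are tensor products of simples of the factors, the classification follows factor by factor. That route buys brevity at the cost of invoking the tensor-product fact; your direct nilpotency computation is self-contained and makes explicit why the arrow ideal coincides with the radical. Either is acceptable; there is no gap in yours.
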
  \smallskip

\begin{subsec}  {\bf $\Ars$-simple weight modules with
no breaks}  \end{subsec}

We apply the results of  the previous subsections to determine the simple modules
in $\mathcal W_{\mathcal O}(\mathsf{A})$  ($\mathsf{A} = \Ars = \Df(\un \phi,\un t)$) (compare 
\cite[\S 4]{BBF}). 
First  we assume that the maximal ideals of $\mathcal O$  have no breaks.  
Recall in this case, $\mathcal B_{\mathcal O} = \{\mathfrak m\}$, where $\mathfrak m$
is the designated maximal ideal (which could be any ideal in $\mathcal O$).

Set  
\begin{equation} \mathsf{Z}(\mathcal O) =\bigoplus_{\mathfrak n\in {\mathcal O}}\Df/\mathfrak n
\end{equation}

\noindent and define a left $\mathsf{A}$-module structure on
$\mathsf{Z}({\mathcal O})$ by specifying 
for $i=1,\dots,n$ and $d' \in \Df$ that 
\begin{eqnarray}\label{eq:actmax}  d' (d + \mathfrak n) &=& d'd + \mathfrak n,  \\
X_i (d+\mathfrak n) &=& \phi_i(d)+\phi_i(\mathfrak n),  \nonumber \\
Y_i (d+\mathfrak n) &=&  t_i\phi^{-1}_i(d)+\phi^{-1}_i(\mathfrak n).  \nonumber 
\end{eqnarray}

\noindent As $\mathsf{Z}(\mathcal O)$ is generated by $1 + \mathfrak m$, we have
that $\mathsf{Z}(\mathcal O) 
\cong \mathsf{A}/\mathsf{A}\mathfrak m$ where $1+\mathfrak m \mapsto 1+\mathsf{A}\mathfrak m$.

We want a more explicit realization of this module, and for this we suppose that
$\mathfrak m = \langle \rho_i - \mu_i,  \sigma_i - \nu_i \mid i=1,\dots, n\rangle$ where
$\mu_i,\nu_i$ are nonzero scalars in $\K$ and $\nu_i \neq \pm r_is_i^{-1}\mu_i$
for any $i$ (no breaks for $\mathfrak m$).     If $\mathfrak n \in \mathcal O$, 
then $\mathfrak n = \phi_{\mathfrak n}(\mathfrak m)$, where  
$\phi_{\mathfrak n} = \phi_1^{k_1} \cdots \phi_n^{k_n}$ for some $k_i \in \mathbb Z$. 
Thus, $\mathfrak n = \langle \rho_i - r_i^{k_i}\mu_i,  \sigma_i - s_i^{k_i}\nu_i \mid i=1,\dots, n\rangle$ 
 Let $\un k = (k_1,\dots, k_n)
\in \mathbb Z^n$.   Here we will write  $\mathsf{Z}(\mu, \nu)$
for the module $\mathsf{Z}(\mathcal O)$, where $\mu = (\mu_1, \dots, \mu_n)$, $\nu
= (\nu_1,\dots, \nu_n) \in (\K^\times)^n$, and we will 
 identify the one-dimensional $\K$-subspace  $\Df/\mathfrak n$ with 
$\K z(\un k)$.       Then the relations  in \eqref{eq:actmax} translate to give
\begin{eqnarray}\label{eq:actz}  \rho_i z(\un k) &=& r_i^{k_i}\mu_i z(\un k), \\
\sigma_i z(\un k) &=& s_i^{k_i} \nu_i z(\un k), \nonumber \\
x_i z(\un k)&=& z(\un k + \epsilon_i),  \nonumber \\
y_i z(\un k) &=&\left( \frac{r_i^{2k_i}\mu_i^2 - s_i^{2k_i}\nu_i^2}{r_i^2 -s_i^2}
\right )  z(\un k - \epsilon_i).  \nonumber 
\end{eqnarray}

\begin{rem}  The polynomial module $\mathsf{P}(n)$ with $\Ars$-action given
by \eqref{eq:acts} is just the simple module $\mathsf{Z}(\un 1, \un 1)$ labeled by
the $n$-tuples of all ones.  \end{rem} \smallskip
 
\begin{thm}\label{thm:nobreaks}  Assume $\K$ is an algebraically closed field,  and let $\mathsf{A} = \Ars$ where the
parameters $r_i,s_i$ satisfy Assumption \ref{asspt1}.    Let  $\mu = (\mu_1, \dots, \mu_n)$, $\nu = (\nu_1, \dots, \nu_n) \in (\K^\times)^n$,
where $\nu_i \neq \pm r_i s_i^{-1} \mu_i$ for any $i$.   Then   
$\mathsf{Z}(\mu, \nu) = \bigoplus_{\un k \in \mathbb Z^n}  \K z(\un k)$ with $\mathsf{A}$-action
given by \eqref{eq:actz} is a  simple weight module for $\mathsf{A}$ with weights in 
the $\Phi$-orbit $\mathcal O$ determined by the maximal ideal $\mathfrak m = 
 \langle \rho_i - \mu_i,  \sigma_i - \nu_i \mid i=1,\dots, n\rangle$.     The module $\mathsf{Z}(\mu,\nu)$
 is the unique (up to isomorphism) simple $\mathsf{A}$-module in the category $\mathcal W_{\mathcal O}(\mathsf{A})$.
 \end{thm}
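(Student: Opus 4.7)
My plan has three stages: to check that the formulas in \eqref{eq:actz} give a well-defined $\mathsf A$-module whose support is the orbit $\mathcal O$; to prove this module is simple by a basis chase that uses the no-breaks hypothesis; and to deduce uniqueness by transporting the unique simple of $\mathfrak A(\K,\emptyset)$ back through the chain of equivalences established in the section. Well-definedness is a direct verification of (R1)--(R4) and (R5'): the diagonal action of $\rho_i,\sigma_i$ and the commuting shifts $x_i,y_i$ dispatch (R1)--(R4) at once, while (R5') reduces to evaluating $t_i$ and $\phi_i(t_i)$ against the eigenvalues $\rho_i\mapsto r_i^{k_i}\mu_i$, $\sigma_i\mapsto s_i^{k_i}\nu_i$. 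Each basis vector $z(\un k)$ is annihilated by the maximal ideal $\phi_1^{k_1}\cdots\phi_n^{k_n}(\mathfrak m)\in\mathcal O$, so the support of $\mathsf{Z}(\mu,\nu)$ is exactly $\mathcal O$ and $\mathsf{Z}(\mu,\nu)$ lies in $\mathcal W_{\mathcal O}(\mathsf A)$.

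For simplicity I would exploit the no-breaks hypothesis. The condition $\nu_i\neq\pm r_is_i^{-1}\mu_i$ is equivalent to $t_i\notin\mathfrak m$ by the equivalences recorded in the discussion of breaks; and because $\mathfrak m$ was chosen with the \emph{largest} number of breaks in $\mathcal O$, the set $J$ of breaks of $\mathfrak m$ contains the breaks of every $\mathfrak n\in\mathcal O$, so here no $\mathfrak n\in\mathcal O$ has any break at all. Consequently every scalar $(r_i^{2k_i}\mu_i^2-s_i^{2k_i}\nu_i^2)/(r_i^2-s_i^2)$ that appears in $y_i\,z(\un k)$ is nonzero. By Assumption \ref{asspt1} the weights in $\mathcal O$ are pairwise distinct, so the weight spaces $\K z(\un k)$ are one-dimensional and any nonzero submodule decomposes as a direct sum of them; hence it contains some $z(\un k)$, from which repeated use of the $x_i$ (to raise $k_i$) and the $y_i$ (to lower $k_i$ with a nonzero coefficient) reaches every basis vector.

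Uniqueness is then formal. Composing Proposition \ref{prop1}, the skeleton inclusion of Proposition \ref{cor:one}, and Proposition \ref{skelrel} yields
\[
\mathcal W_{\mathcal O}(\mathsf A)\ \simeq\ \mathcal C_{\mathcal O}\text{-}\Mod\ \simeq\ \mathcal S_{\mathcal O}\text{-}\Mod\ \simeq\ \mathfrak A(\Df/\mathfrak m,J)\text{-}\Mod,
\]
and in the no-breaks case $J=\emptyset$, so $\mathfrak A(\K,\emptyset)$ has a single object $\omega$ with $\End(\omega)=\K$. By Proposition \ref{aprop} it admits exactly one simple module $\mathfrak S_\omega$ up to isomorphism, and transporting $\mathfrak S_\omega$ back through the equivalences produces a single simple object in $\mathcal W_{\mathcal O}(\mathsf A)$, which by the previous two paragraphs must be $\mathsf{Z}(\mu,\nu)$. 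The one point needing some care --- not a hard calculation, but easy to lose sight of --- is the propagation of ``no breaks at $\mathfrak m$'' to ``no breaks anywhere in $\mathcal O$'', since this is exactly what keeps all the $y_i$-coefficients nonzero and drives the simplicity argument; once that is noted, the rest is either routine shift-operator verification or a direct appeal to the categorical machinery already in place.
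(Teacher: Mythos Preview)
Your proposal is correct and matches the paper's intended argument. The paper states Theorem~\ref{thm:nobreaks} without an explicit proof, relying entirely on the chain of equivalences built up in Propositions~\ref{prop1}, \ref{cor:one}, \ref{skelrel}, and \ref{aprop}; your uniqueness argument is precisely that chain. Your direct simplicity argument via one-dimensional weight spaces and nonvanishing $y_i$-coefficients is an elementary addition that the paper leaves implicit (the paper instead identifies $\mathsf{Z}(\mathcal O)\cong \mathsf{A}/\mathsf{A}\mathfrak m$ and lets the categorical equivalence do the work), but it is correct and arguably more transparent. Your flagged ``point needing care''---that no breaks at $\mathfrak m$ must propagate to no breaks anywhere in $\mathcal O$---is exactly right: the theorem is stated inside the subsection whose standing hypothesis is that the \emph{orbit} has no breaks, so the condition $\nu_i\neq\pm r_is_i^{-1}\mu_i$ on $\mathfrak m$ alone is to be read in that context, and your justification via the maximal-break discussion is the correct one.
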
   \smallskip

\begin{subsec}  {\bf $\Ars$-simple weight modules with
breaks}  \end{subsec}

Here we assume $\mathcal O$ is an orbit of $\mathfrak {max}\Df$,  and  the maximal ideal $\mathfrak m$ is chosen from $\mathcal O$  to have 
the largest number of breaks.   We suppose the breaks occur at the values  $J = \{j_1, \dots, j_q\}$
and $J \neq \emptyset$.   
Recall that $\mathcal B_{\mathcal O} = \{ \phi^\alpha(\mathfrak m):= \phi_{j_1}^{\alpha_1} \cdots \phi_{j_q}^{\alpha_q} (\mathfrak m)
\mid \alpha_j \in \{0,1\} \ \hbox{\rm for all} \ j \in J\}$ in this case.    For each maximal ideal $\phi^\alpha (\mathfrak m)$ in $\mathcal B_{\mathcal O}$,   let

\begin{equation} \label{eq:sop}  \mathsf{Z}({\mathcal O}, \alpha ):=\bigoplus_{\mathfrak n\in
{\mathcal O}_{\alpha}}\Df/\mathfrak n,   \end{equation}

\noindent where $\mathcal O_{\alpha}$ is as in \eqref{eq:opdef}.     One
can define the 
structure of a left $\mathsf{A}$-module on $\mathsf{Z}(\mathcal O,\alpha)$ by the
same formulae as 
in \eqref{eq:actmax}, but when the image is not in
$\mathsf{Z}(\mathcal O,\alpha)$, the result is 0. We
have in this case $\mathsf{Z}(\mathcal O,\alpha) 
\cong \mathsf{A}/\mathsf{A}(\phi^\alpha(\mathfrak m), Z_{j_1}, \ldots, Z_{j_q})$ where
$Z_{j_k}=X_{j_k}$ if $\alpha_k = 0$ (i.e. $\phi^\alpha(\mathfrak m)$ has a 
break with respect to $j_k$), and $Z_{j_k}=Y_{j_k}$
otherwise.  The 
isomorphism is given by $1+\phi^\alpha(\mathfrak m) \mapsto 1 + \mathsf{A}(\phi^\alpha(\mathfrak m),
Z_{j_1}, \ldots, 
Z_{j_q})$.  It follows {f}rom the construction that $\mathsf{Z}({{\mathcal O}, \alpha})$ is a  simple $\mathsf{A}$-module for each such $\alpha \in \{0,1\}^{|J|}$.

 \begin{thm}\label{thm:breaks}  Let $\K$ be an algebraically closed field,  and let $\mathsf{A} = \Ars$,  where the parameters $r_i,s_i$ satisfy Assumption \ref{asspt1}.    Let  $\mu = (\mu_1, \dots, \mu_n)$, $\nu = (\nu_1, \dots, \nu_n) \in (\K^\times)^n$,
where $\nu_j = \pm  r_j s_j^{-1} \mu_j$ if and only if  $j \in J = \{j_1,\dots, j_q\} \neq \emptyset$.  Let $\mathcal O$ be the $\Phi$-orbit determined by the maximal ideal $\mathfrak m = 
 \langle \rho_j- \mu_j,  \sigma_j - \nu_j \mid j=1,\dots, n\rangle$.
Suppose  $\alpha = (\alpha_{j_1},\dots,\alpha_{j_q})
\in \{0,1\}^{|J|}$, and let  $\mathcal J_\alpha$ be the set of $\un k = (k_1,\dots,k_n)   \in \mathbb Z^n$  satisfying the following properties:  if $j\in J$ and $\alpha_j = 1$, then $k_j \in  \mathbb Z_{\geq 0}$;
if $j \in J$ and $\alpha_j = 0$, then $k_j \in \mathbb Z_{\leq 0}$; and   if $j \in J^c$, then $k_j
\in \mathbb Z$.     Then   
$\mathsf{Z}_{J,\alpha}(\mu, \nu) = \bigoplus_{\un k \in \mathcal J_\alpha}  \K z(\un k)$ with $\mathsf{A}$-action
given by \eqref{eq:actz} is a  simple weight module for $\mathsf{A}$ with weights in 
$\mathcal O_\alpha$  (see \eqref{eq:opdef} ).   The modules $\mathsf{Z}_{J,\alpha}(\mu,\nu)$
for $\alpha \in \{0,1\}^{|J|}$
are the unique (up to isomorphism) simple $\mathsf{A}$-modules in the category $\mathcal W_{\mathcal O}(\mathsf{A})$.
 \end{thm}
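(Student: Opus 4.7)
The plan is a straightforward application of the categorical equivalences assembled in this section.

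Combining Proposition \ref{prop1}, Proposition \ref{cor:one} (together with the Morita-type argument relating a skeleton to its ambient category), Proposition \ref{skelrel}, and Proposition \ref{aprop} yields a chain of equivalences $\mathcal W_{\mathcal O}(\mathsf A) \cong \mathcal C_{\mathcal O}$-$\Mod \cong \mathcal S_{\mathcal O}$-$\Mod \cong \mathfrak A(\K,J)$-$\Mod$, and the simple objects on the right-hand side are enumerated as $\{\mathfrak S_\alpha : \alpha \in \{0,1\}^{|J|}\}$. This immediately supplies both existence and uniqueness: there are exactly $2^{|J|}$ isomorphism classes of simple objects in $\mathcal W_{\mathcal O}(\mathsf A)$, one for each $\alpha$.

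To identify the simple $\mathsf A$-module corresponding to $\mathfrak S_\alpha$, I trace it backward through the equivalences. Under Morita equivalence $\mathfrak S_\alpha$ induces the $\mathcal C_{\mathcal O}$-module $\mathsf M_\alpha$ with $\mathsf M_\alpha(\mathfrak n)$ one-dimensional exactly when $\mathfrak n$ is isomorphic to $\phi^\alpha(\mathfrak m)$ in $\mathcal C_{\mathcal O}$, which by Lemma \ref{lem:iso} and the Remark after \eqref{eq:opdef} is exactly when $\mathfrak n \in \mathcal O_\alpha$. Applying $\mathsf F'$ from Proposition \ref{prop1} then returns the $\mathsf A$-module $\bigoplus_{\mathfrak n \in \mathcal O_\alpha} \mathsf M_\alpha(\mathfrak n) \cong \mathsf Z(\mathcal O,\alpha)$ as in \eqref{eq:sop}. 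Labelling each one-dimensional summand $\Df/\mathfrak n$ by $\K z(\un k)$ via $\mathfrak n = \phi^{\un k}(\mathfrak m)$ then translates \eqref{eq:actmax} into the formulas \eqref{eq:actz} on the index set $\mathcal J_\alpha$, with the convention $z(\un \ell) = 0$ for $\un \ell \notin \mathcal J_\alpha$.

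The remaining work is a short direct check. Simplicity of $\mathsf Z_{J,\alpha}(\mu,\nu)$ follows because Assumption \ref{asspt1} makes the $\rho_i,\sigma_i$-weight spaces one-dimensional with pairwise distinct weights, so any nonzero submodule contains some $z(\un k_0)$, and the nonvanishing of the action coefficients at interior points of $\mathcal J_\alpha$ lets one reach every other $z(\un k)$ by applying the $x_i$ and $y_i$. Consistency at the boundary of $\mathcal J_\alpha$ is where the break relation $\nu_j = \pm r_js_j^{-1}\mu_j$ enters: the coefficient $(r_j^{2k_j}\mu_j^2 - s_j^{2k_j}\nu_j^2)/(r_j^2 - s_j^2)$ in $y_j z(\un k)$ vanishes precisely when $\un k - \e_j$ would leave $\mathcal J_\alpha$ (for $\alpha_j = 1$), while the vanishing of $x_j$ on the opposite boundary (for $\alpha_j = 0$) is enforced directly by the quotient description $\mathsf A/\mathsf A(\phi^\alpha(\mathfrak m), Z_{j_1},\dots,Z_{j_q})$. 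The main obstacle is the combinatorial bookkeeping aligning $\alpha$, $\mathcal O_\alpha$, $\mathfrak S_\alpha$, and $\mathcal J_\alpha$ under the chain of equivalences and checking the break-induced vanishing; everything else is a translation of definitions.
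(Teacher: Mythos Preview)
Your proposal is correct and follows precisely the route the paper takes: the theorem is presented as the culmination of the chain of equivalences $\mathcal W_{\mathcal O}(\mathsf A)\cong \mathcal C_{\mathcal O}\text{-}\Mod \cong \mathcal S_{\mathcal O}\text{-}\Mod \cong \mathfrak A(\K,J)\text{-}\Mod$ built in Propositions~\ref{prop1}, \ref{cor:one}, \ref{skelrel}, and \ref{aprop}, together with the explicit construction of $\mathsf Z(\mathcal O,\alpha)$ just before the statement. The paper gives no separate proof environment for Theorem~\ref{thm:breaks}, and your tracing of $\mathfrak S_\alpha$ back through the equivalences to $\mathsf Z_{J,\alpha}(\mu,\nu)$, plus the direct weight-space argument for simplicity, is exactly what the paper intends the reader to supply.
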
   
  
\begin{rem} In applying the relations in \eqref{eq:actz} to $\mathsf{Z}_{J,\alpha}(\mu,\nu)$,  it is to be understood that  $z(\un \ell) = 0$ whenever $\un \ell \not \in \mathcal J_\alpha$. 
The modules in Theorems \ref{thm:nobreaks} and \ref{thm:breaks} exhaust the
simple modules in  $\mathcal W_{\mathcal O}(\mathsf{A})$,  and they  become 
weight modules for $\mathsf{U}_{\un r, \un s}(\mathfrak{sl}_n)$ via the 
map in \eqref{eq:maps}.  
   \end{rem}
  \begin{rem}  The Verma modules $\mathsf{V}(\lambda, \zeta)$ of  Section 5 are just the special case when 
 $\mu_i = \lambda_i\zeta_i$ and $\nu_i = \lambda_i \zeta_i'$ for each $i$, where
 $\zeta_i = \zeta(\rho_i)$ and $\zeta_i' = \zeta(\sigma_i)$.   Thus $\nu_i = \pm \mu_i$
 for all $i$, and $J = \{1,\dots, n\}$.   If  $\alpha = (\alpha_1,\dots, \alpha_n) = \un 1$, then 
$\mathcal J_{\un 1} = \mathbb N^n$.    Using \eqref{eq:actz} and \eqref{eq:vermav}
 it is easy to see that $\mathsf{V}(\lambda,\zeta) \cong \mathsf{Z}_{J,\un 1}(\mu, \nu)$ 
for these choices.    \end{rem} 

\begin{section}{Whittaker modules for $\Ars$} \end{section} 

Kostant \cite{K} introduced a class of modules for finite-dimensional complex 
semisimple Lie algebras and called them Whittaker modules because of their connections with Whittaker equations in number theory.  In \cite{Bl}, Block showed that the simple modules for $\mathfrak{sl}_2$ over $\mathbb C$ are either  highest (or lowest) weight modules, Whittaker modules, or modules obtained by localization.   

For a generalized Weyl algebra $\mathsf{A} = \Df(\un \phi, \un t)$,  we say that $(\mathsf {W}, w)$ is a {\it Whittaker pair for $\mathsf{A}$ of type $\xi = (\xi_i), \, \xi_i \in \K^\times$},   if   $\mathsf{W}$ is an $\mathsf{A}$-module such that 
$\mathsf{W} = \mathsf{A}w$  and  $X_iw = \xi_i w$ for  all $i$.  
In \cite[Thm.~3.12]{BO},  isomorphism classes of Whittaker pairs of type $\xi$ for $\mathsf{A}$  were shown to be in bijection with
the $\phi$-stable ideals of $\Df$.    In this final section, we apply that result to describe
the Whittaker modules for $\mathsf{A} = \Ars$.  

We have seen that the multiparameter Weyl algebra $\mathsf{A} = \mathsf{A}_{\un r, \un s}(n)$ has a realization
as a generalized Weyl algebra $\mathsf{A} = \Df(\un \phi, \un t)$, and the proof of Theorem \ref{thm:Asimple}
shows that there are no nontrivial $\phi$-invariant ideals in $\Df$ under Assumption \ref{asspt1}.    
Thus, every Whittaker module for $\mathsf{A}$ of type $\xi$ must be simple.  In particular,  the universal
Whittaker module $\mathfrak{W}(\xi):= \mathsf{A} \otimes_{\K[X]} \K w_{ \xi}$ of type $\xi$ constructed in
\cite[Sec.~3]{BO}   must be simple. 
 Here $\K[X]$ is the subalgebra of $\mathsf{A}$ generated
by the $X_i, i=1,\dots,n$,  and $\K w_{\xi}$ is the $\K[X]$ module with
$X_i w_{\xi} = \xi_i w_{\xi}$ for all $i$.    Since every Whittaker module for $\mathsf{A}$ of type $\xi$
is a homomorphic image of $\mathfrak{W}(\xi)$, every  Whittaker module for $\mathsf{A}$
 of type $\xi$ is isomorphic to $\mathfrak{W}(\xi)$. By \cite[Rem.~3.6]{BO},  the vectors 
\begin{equation}\label{eq:Whitbasis} w(\un k, \un \ell) := \left(\prod_{i =1}^n \rho_i^{k_i} \prod_{j=1}^n \sigma_j^{\ell_j}\right) \otimes w_{\xi}\end{equation}
for $\un k, \un \ell \in \mathbb Z^n$
form  a basis  for $\mathfrak{W}(\xi)$.  The $\mathsf{A}$-action
is given by the following:
\begin{eqnarray}\label{eq:Whitact}  \rho_i w(\un k, \un \ell) &=& w(\un k + \epsilon_i, \un \ell), \\
\sigma_i w(\un k, \un \ell) &=&  w(\un k, \un \ell+\epsilon_i), \nonumber \\
X_i  w(\un k, \un \ell) &=& \xi_i\prod_{j=1}^n r_j^{-k_j} s_j^{-\ell_j}w(\un k, \un \ell), \nonumber  \\
Y_i w(\un k, \un \ell) &=& \nonumber  \\
&& \hspace{-.38truein} \xi_i^{-1} \prod_{j=1}^n r_j^{-k_j} s_j^{-\ell_j} 
\left(\frac{r_i^2}{r_i^2-s_i^2} w(\un k + 2 \epsilon_i, \un \ell) - \frac{s_i^2}
{r_i^2-s_i^2} w(\un k, \un \ell+2 \epsilon_i) \right). \nonumber    \end{eqnarray}

\begin{thm}\label {thm:Whit} When the parameters $r_i,s_i$ satisfy Assumption \ref{asspt1},  every Whittaker  module of type $\xi$ for the algebra $\mathsf{A} =\Ars$ is simple
and isomorphic to the universal Whittaker module   
$\mathfrak{W}({\xi}) = \mathsf{A} \otimes_{\K[X]} \K w_{\xi}$ with basis as in 
\eqref{eq:Whitbasis} and   $\mathsf{A}$-action given by \eqref{eq:Whitact},
where $x_i$ acts by $X_i$ and $y_i$ by $Y_i$ for all $i=1,\dots, n$.  \end{thm}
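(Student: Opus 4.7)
The plan is to combine two ingredients already in hand: the bijection of \cite[Thm.~3.12]{BO} between isomorphism classes of Whittaker pairs of type $\xi$ for a generalized Weyl algebra $\Df(\un\phi,\un t)$ and the $\phi$-stable ideals of $\Df$, together with the observation (already recorded in the paragraph preceding the theorem statement, extracted from the proof of Theorem \ref{thm:Asimple}) that under Assumption \ref{asspt1} the Laurent polynomial ring $\Df = \K[\rho_i^{\pm 1},\sigma_i^{\pm 1}]$ has no nontrivial $\Phi$-invariant ideals.

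First I would deduce both uniqueness and simplicity in one stroke: since $0$ and $\Df$ are the only $\Phi$-stable ideals of $\Df$, the \cite{BO} correspondence leaves exactly one isomorphism class of Whittaker pairs of type $\xi$, represented by the universal pair $(\mathfrak{W}(\xi),\,1\ot w_\xi)$. Any Whittaker pair $(\mathsf{W},w)$ of type $\xi$ is a quotient of $\mathfrak{W}(\xi)$ by the universal property, and uniqueness forces $\mathsf{W}\cong\mathfrak{W}(\xi)$. Simplicity of $\mathfrak{W}(\xi)$ is then automatic: any proper nonzero quotient would still be a Whittaker pair of type $\xi$ (the image of $w_\xi$ is a cyclic type-$\xi$ vector), hence isomorphic to $\mathfrak{W}(\xi)$ itself, contradicting properness.

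This settles the structural content of the theorem. What remains is bookkeeping: verifying that the vectors $w(\un k,\un\ell)$ in \eqref{eq:Whitbasis} form a $\K$-basis and that the $\mathsf{A}$-action is as in \eqref{eq:Whitact}. The basis assertion is cited from \cite[Rem.~3.6]{BO}, but can also be read off the generalized Weyl algebra structure from Proposition \ref{prop:GWA}: a PBW-type monomial basis of $\mathsf{A}$ has the shape $Y^{\un b}\rho^{\un k}\sigma^{\un\ell}X^{\un a}$; applied to $w_\xi$ the $X_i$'s become scalars, and (R5)$'$ lets one eliminate each $Y_iX_i$ pair, leaving precisely the span of the $\rho^{\un k}\sigma^{\un\ell}w_\xi = w(\un k,\un\ell)$. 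Linear independence is inherited from the fact that $\Df$ is a domain and $\mathsf{A}$ is free over $\K[X]$.

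The action formulas in \eqref{eq:Whitact} are then verified directly from (R1)--(R5)$'$: the $\rho_i$- and $\sigma_i$-actions are tautological from the definition of $w(\un k,\un\ell)$; for $X_i$, commute $x_i$ leftward through the toral factors using (R2) and (R3) and then apply $X_iw_\xi = \xi_i w_\xi$; for $Y_i$, commute $y_i$ similarly, then write $y_i w_\xi = \xi_i^{-1}\, y_ix_i\,w_\xi$ and substitute the expression $y_ix_i = (r_i^2\rho_i^2 - s_i^2\sigma_i^2)/(r_i^2 - s_i^2)$ from (R5)$'$, which splits the result into precisely the two terms displayed. No real obstacle arises; the only delicate point is careful tracking of the commutation scalars coming from (R2) and (R3), which is why I would carry out the substitution of (R5)$'$ last, after the toral variables have been straightened out.
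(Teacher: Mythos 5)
Your proposal is correct and follows essentially the same route as the paper: the paper likewise combines the bijection of \cite[Thm.~3.12]{BO} with the absence of nontrivial $\phi$-invariant ideals in $\Df$ (extracted from the proof of Theorem \ref{thm:Asimple}) to get simplicity and uniqueness at once, and cites \cite[Rem.~3.6]{BO} for the basis \eqref{eq:Whitbasis}. The only difference is that you additionally sketch the PBW-style justification of the basis and the commutation bookkeeping behind \eqref{eq:Whitact}, which the paper leaves to the reader and to \cite{BO}.
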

\m

\begin{center} {\textbf{Acknowledgments}}   \end{center}
This paper was written while the author was a Simons Visiting Professor in the Combinatorial Representation Theory Program at the Mathematical Sciences Research Institute.   The  hospitality of MSRI and the support from the Simons Foundation as well as from National Science Foundation grant \#{}DMS--0245082 are  gratefully acknowledged.

\end{document}